\documentclass[colorlinks,11pt,reqno]{amsart}

\usepackage[T1]{fontenc}
\usepackage[utf8]{inputenc}
\usepackage{verbatim}
\usepackage{times}
\usepackage{stmaryrd}
\usepackage[a4paper]{geometry}
\usepackage{graphicx}
\usepackage{tabls}
\usepackage{url}

\newtheorem{theorem}{Theorem}[section]
\newtheorem{lemma}[theorem]{Lemma}
\newtheorem{proposition}[theorem]{Proposition}
\newtheorem{Thm}{Theorem}

\theoremstyle{definition}
\newtheorem{definition}[theorem]{Definition}

\newtheorem{corollary}[theorem]{Corollary}

\theoremstyle{remark}
\newtheorem{remark}[theorem]{Remark}

\numberwithin{equation}{section}

\usepackage{ulem}
\usepackage{amsrefs}
\usepackage{color}

\let\oldmarginpar\marginpar
\renewcommand\marginpar[1]{\-\oldmarginpar[\raggedleft\footnotesize #1]
{\raggedright\footnotesize #1}}

\usepackage[T1]{fontenc}
\usepackage{lmodern}
\usepackage{setspace}
\usepackage{indentfirst}

\usepackage{multicol}
\usepackage[linktocpage = true]{hyperref}
\hypersetup{
 colorlinks = true,
 citecolor = blue,
}
\usepackage{color}
\definecolor{red}{rgb}{1,0,0}
\definecolor{green}{rgb}{0,1,0}
\definecolor{blue}{rgb}{0,0,1}
\definecolor{refkey}{gray}{.625}
\definecolor{labelkey}{gray}{.625}

\usepackage{amsmath,amssymb}
\usepackage{amsfonts}
\usepackage{amsthm}
\usepackage{amscd}
\usepackage{enumerate}
\usepackage{paralist}
\usepackage{graphicx}
\usepackage{mathtools}
\usepackage{kantlipsum}
\usepackage{tikz-cd}
\usetikzlibrary{arrows.meta,calc,positioning}
\allowdisplaybreaks

\numberwithin{equation}{section}

\newcommand{\RR}{\mathbb{R}}
\newcommand{\CC}{\mathbb{C}}

\newcommand{\cE}{\mathcal{E}}

\newcommand{\Pair}[2]{\left\langle #1,#2\right\rangle}

\newcommand{\C}{\mathbb{C}}
\newcommand{\CP}{\mathbb{CP}}
\newcommand{\R}{\mathbb{R}}
\newcommand{\Z}{\mathbb{Z}}

\DeclareMathOperator{\Spec}{Spec}

\DeclareMathOperator{\Hom}{Hom}
\DeclareMathOperator{\Span}{Span}
\DeclareMathOperator{\relint}{relint}
\DeclareMathOperator{\codim}{codim}
\DeclareMathOperator{\charac}{char}

\begin{document}

\title{Holomorphic polyvector fields on toric varieties via Klyachko filtrations}

\author{Wei Hong}
\address{School of Mathematics and Statistics, Wuhan University, Wuhan, 430072, China}
\email{\href{mailto:~~~hong\textunderscore  w@whu.edu.cn}{hong\textunderscore  w@whu.edu.cn}}

\author{Maosong Xiang}
\address{School of Mathematics and Statistics, Center for Mathematical Sciences, Huazhong University of Science and Technology, Wuhan, China.}
\email{\href{mailto:~~~msxiang@hust.edu.cn}{msxiang@hust.edu.cn}}

\begin{abstract}
Motivated by the extended deformation theory of complex manifolds, we give a combinatorial description of holomorphic polyvector fields on a smooth compact toric variety via Klyachko's filtrations.
This yields dimension and formal equivariant-character formulas and recovers the descriptions of Demazure roots and anticanonical sections.
\end{abstract}

\maketitle

\noindent  {\it Keywords:} \hspace*{0.6cm}
Toric varieties, polyvector fields, Klyachko filtrations, anticanonical polytopes. \\
\noindent  {\it MSC(2020):}\hspace*{0.5cm}  primary 14M25; secondary 14F05, 17B66.

\tableofcontents

\section{Introduction}\label{sect-Intr}

Let \(X\) be a smooth complex variety. Its global holomorphic polyvector fields form the graded vector space
\[
H^0\!\left(X,\bigwedge\nolimits^\bullet T_X\right) = \bigoplus_{k\geq0}
H^0\!\left(X,\bigwedge\nolimits^kT_X\right).
\]
This space carries a Gerstenhaber algebra structure under the wedge product and the Schouten--Nijenhuis bracket. Polyvector fields occur naturally in deformation quantization, extended deformation theory and mirror symmetry; see~\cites{BCOV, B-K 98, Kon}.

For an $n$-dimensional toric variety $X_\Delta$, the torus action makes the tangent bundle and its exterior powers equivariant. Their spaces of global sections therefore decompose into torus-weight spaces.
In degree one, Demazure described the nonzero weights of \(H^0(X_\Delta, T_{X_\Delta})\) in terms
of the lattice points now called Demazure roots~\cite{Demazure}.
In top degree $n$,
\[
\bigwedge\nolimits^n T_{X_\Delta} \cong \mathcal{O}_{X_{\Delta}}(-K_{X_{\Delta}}),
\]
and the weight decomposition is the standard lattice-point description of sections of a torus-invariant divisor~\cites{Cox, Fulton}.

The purpose of this article is to give a uniform and self-contained account of
\[
H^0\!\left(X_\Delta,\bigwedge\nolimits^kT_{X_\Delta} \right), \qquad 0\leq k\leq n,
\]
for a smooth complete toric variety \(X_\Delta\) of dimension $n$.
The proof is formulated in terms of Klyachko's description of equivariant toric vector bundles by compatible filtrations \cite{Klyachko}.

To state our main theorem, we fix some notations:
Let \(N \cong \Z^n\) be a lattice, let
\[
M=\Hom(N, \Z)
\]
be its dual.
Let \(\Delta\) be a smooth complete fan in
\[
N_{\R}=N \otimes_{\Z} \RR.
\]
For every ray \(\rho \in \Delta(1)\), denote by \(v_\rho\in N\) its primitive generator.

Define
\[
P_\Delta =
\left\{
u\in M_{\R}\ \middle|\
\Pair{u}{v_\rho}\geq-1
\text{ for every }\rho\in\Delta(1)
\right\}.
\]
For \(u\in P_\Delta\), set
\[
A(u) =
\left\{
\rho\in\Delta(1)
\ \middle|\
\Pair{u}{v_\rho}=-1
\right\},
\]
and
\[
L_u =
\Span_{\C}
\{v_\rho\mid\rho\in A(u)\}
\subseteq N_{\C}.
\]
Put
\[
i(u)=\dim_{\C}L_u.
\]
For \(1\leq k\leq n\), define
\[
N_u^k
=
\begin{cases}
\displaystyle
\bigwedge\nolimits^{i(u)}L_u
\wedge
\bigwedge\nolimits^{k-i(u)}N_{\C},
&i(u)\leq k,\\[6pt]
0,&i(u)>k.
\end{cases}
\]
Note that the Lie algebra of the algebraic torus is isomorphic to $N_\C$. The infinitesimal torus action on $X$ induces a map
\[
\rho_N \colon N_{\C} \longrightarrow H^0(X_\Delta,T_{X_\Delta}),
\]
which is extended to exterior powers
\[
 \rho_N \colon \bigwedge\nolimits^k N_{\C} \longrightarrow H^0(X_\Delta,\bigwedge\nolimits^k T_{X_\Delta}).
\]
\begin{Thm}[=Theorem~\ref{thm:main} and Corollary~\ref{cor:dimension}] \label{General-multiVect-thm}
Let \(X_\Delta\) be an \(n\)-dimensional smooth complete toric
variety. For \(1\leq k\leq n\),
\[
H^0\!\left( X_\Delta,\bigwedge\nolimits^kT_{X_\Delta} \right)
= \bigoplus_{\substack{u\in P_\Delta\cap M\\i(u)\leq k}} \chi^u\rho_N(N_u^k),
\]
where $\chi^u$ is the corresponding character.

If \(u\) lies in the relative interior of a face of codimension \(i\), then
\[
i(u)=i
\]
and
\[
\dim_{\C}\chi^u\rho_N(N_u^k)
=
\binom{n-i}{k-i}.
\]
Consequently,
\[
\dim_{\C} H^0\!\left(X_\Delta,\bigwedge\nolimits^kT_{X_\Delta} \right) = \sum_{i=0}^{k}
\binom{n-i}{k-i} \sum_{\substack{F\preceq P_\Delta\\\codim F=i}}
\#\bigl(\relint(F)\cap M\bigr).
\]
\end{Thm}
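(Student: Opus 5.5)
We will use Klyachko's description of the equivariant bundle $T_{X_\Delta}$. For each ray $\rho\in\Delta(1)$, the fibre $E=N_\C$ of $T_{X_\Delta}$ at the identity of the torus carries a decreasing filtration
\[
E^\rho(j)=
\begin{cases}
N_\C, & j\leq 0,\\
\C v_\rho, & j=1,\\
0, & j\geq 2.
\end{cases}
\]
Klyachko's theorem then expresses the weight-$u$ part of global sections as
\[
H^0(X_\Delta,\mathcal E)_u=\bigcap_{\rho} E^\rho\bigl(\Pair{u}{v_\rho}\bigr).
\]
Since the compatible filtrations are functorial under tensor operations, $\bigwedge^k T_{X_\Delta}$ corresponds to the induced filtrations on $\bigwedge^k N_\C$. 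These are
\[
\bigl(\bigwedge\nolimits^k E\bigr)^\rho(j)=\sum_{j_1+\dots+j_k=j} E^\rho(j_1)\wedge\cdots\wedge E^\rho(j_k).
\]
I would first compute this explicitly. Because $v_\rho\wedge v_\rho=0$, we get $\bigwedge^k N_\C$ for $j\leq 0$ and $v_\rho\wedge\bigwedge^{k-1}N_\C$ for $j=1$. We also get $0$ for $j\geq 2$ when $k\ge1$. Here one must take care with the sign convention (characters $\chi^u$ vs.\ $\chi^{-u}$). I would check it against $k=n$, where the answer must be the anticanonical polytope $P_\Delta$. This fixes the convention so that $\Pair{u}{v_\rho}\ge -1$ is the nonvanishing condition, with value $-1$ forcing the factor $v_\rho$.

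Next, the weight-$u$ space is therefore zero unless $\Pair{u}{v_\rho}\geq -1$ for all $\rho$, i.e.\ $u\in P_\Delta\cap M$. For such $u$ it equals
\[
\bigcap_{\rho\in A(u)} v_\rho\wedge\bigwedge\nolimits^{k-1}N_\C.
\]
The key linear-algebra lemma is the following. For linearly independent vectors $w_1,\dots,w_i$ spanning $L$,
\[
\bigcap_{j}\bigl(w_j\wedge\bigwedge\nolimits^{k-1}V\bigr)=\bigwedge\nolimits^{i}L\wedge\bigwedge\nolimits^{k-i}V,
\]
which is $0$ when $i>k$. I would prove it by extending the $w_j$ to a basis and using the monomial basis of $\bigwedge^k V$. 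The subspace $w_j\wedge\bigwedge^{k-1}V$ is then spanned by the monomials containing $w_j$, so the intersection is spanned by the monomials containing all $w_j$.

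The main obstacle is that the $v_\rho$ with $\rho\in A(u)$ need not be linearly independent a priori. I would handle this using the face structure. The set $A(u)$ indexes the facets of $P_\Delta$ containing $u$, but $P_\Delta$ need not be a lattice-simple polytope. So I would instead show directly that the intersection only depends on $L_u$. Choose a maximal linearly independent subset $\{v_{\rho_1},\dots,v_{\rho_i}\}\subseteq A(u)$. The intersection over this subset already equals $\bigwedge^i L_u\wedge\bigwedge^{k-i}N_\C$ by the lemma. Any other $v_\rho\in A(u)$ lies in $L_u$, so this space is contained in $v_\rho\wedge\bigwedge^{k-1}N_\C$. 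The reason is that $\bigwedge^i L_u$ is a line spanned by a decomposable $i$-vector divisible by any nonzero vector of $L_u$. Hence the weight-$u$ space is $N_u^k$, and under $\rho_N$ (the identification of the Klyachko fibre with the torus-orbit translates) this gives the component $\chi^u\rho_N(N_u^k)$. Summing over $u$ proves the first claim.

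For the dimension statement, let $u\in\relint F$ with $\codim F=i$. The facets containing $F$ are exactly the hyperplanes $\Pair{\cdot}{v_\rho}=-1$ for $\rho\in A(u)$, and $F$ is their intersection with $P_\Delta$. Since $P_\Delta$ is full-dimensional (it contains $0$ in its interior, as the fan is complete), the affine span of $F$ has dimension $n-\dim L_u$. Hence $i(u)=i$. Finally,
\[
\dim\bigl(\bigwedge\nolimits^i L_u\wedge\bigwedge\nolimits^{k-i}N_\C\bigr)=\binom{n-i}{k-i},
\]
by the monomial basis. The map $\rho_N$ is injective on $\bigwedge^k N_\C$, because the torus acts freely on the open orbit. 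Summing over faces, with the $i=0$ term being the interior points of $P_\Delta$, gives the formula.
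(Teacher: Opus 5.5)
Your proposal is correct and is essentially the paper's proof: Klyachko's filtrations for $T_{X_\Delta}$ and $\bigwedge^k T_{X_\Delta}$, reduction of each weight space to $\bigcap_{\rho\in A(u)} v_\rho\wedge\bigwedge^{k-1}N_{\C}$, identification of $i(u)$ with $\codim F$ via the normal space of the face, and injectivity of $\rho_N$ for the count; your calibration of the sign at $k=n$ stands in for the paper's explicit convention \eqref{eq:section-action}, which the paper cross-checks in Section~\ref{sec:affine}. Your explicit handling of linearly dependent active normals fills in a step the paper states tersely in Proposition~\ref{prop:individual-weight}; the only slip is saying every active hyperplane is a facet hyperplane (for the Hirzebruch surface with rays $(1,0),(0,1),(-1,2),(0,-1)$, the inequality from $(0,1)$ is active only at the vertex $(-1,-1)$), which is harmless because your dimension argument only needs $F=P_\Delta\cap\bigcap_{\rho\in A(u)}\{\langle\cdot,v_\rho\rangle=-1\}$ together with strictness of the inactive inequalities at $u$.
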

In particular, when $k=1$,  lattice points in the relative interior of facets are identified with the Demazure roots of the Lie algebra $H^0(X_\Delta, T_{X_{\Delta}})$,
we thus recover Demazure's result~\cite{Demazure} on holomorphic vector fields on $X_{\Delta}$.
When $X_{\Delta} = \CP^n$, this theorem has already been proved by the first author in~\cite{Hong 19}.

Finally, we would like to mention some works that are related to the present paper.
The weight-space decomposition studied in this paper had previously appeared in two preprints by the first author \cite{HongPoisson, HongPolyvector}. It was subsequently stated as
Theorem~1.1 by Deng and Hong \cite{Deng-Hong 21}, with attribution to \cite{HongPolyvector}, and was applied there to the study of Batalin-Vilkovisky operators on the Gerstenhaber algebra of
holomorphic polyvector fields. The present paper provides a self-contained derivation of the decomposition from Klyachko's filtration formalism, makes the weight convention explicit, and identifies the resulting filtration intersections with the normal spaces of faces of the associated polytope.
In~\cite{Filip}, Filip gave a convex geometric description of the Hodge decomposition of the Hochschild cohomology of an affine toric variety, which can be used to prove the existence of deformation quantization of some possibly singular affine toric varieties. It might be interesting to connect our combinatorial description of polyvector fields (in the affine case) to Filip's description of Hochschild cohomology to understand deformation quantization in the combinatorial manner.
Recently, in order to compute multiplicities in tropical and log Gromov-Witten theory, Mandel and Ruddat~\cite{M-R 19} studied the space $A = \Z[N] \otimes \wedge^\bullet M$ of integral polyvector fields on the algebraic torus $\operatorname{Spec}\Z[N]$. They proved that this space admits a Batalin-Vilkovisky algebra structure and that a submodule $A_0$ of $A$ carries an $L_\infty$-algebra structure.

The paper is organized as follows.
In Section~\ref{sec:preliminaries} we introduce the toric notation, the anticanonical polytope, and the relevant Klyachko filtrations.
Then we prove Theorem~\ref{thm:main} in Section~\ref{sec:weights}.
In Section~\ref{subsec:formal-character}, we  prove a formal character formula, which provides a generating function reformulation of the weight-space decomposition.
Section~\ref{sec:affine} provides an affine interpretation of our main result.
Finally, in Section~\ref{sec:consequences}, we discuss two special cases on vector fields and anticanonical sections, respectively, and work out explicitly holomorphic vector fields on the blow-up of \(\mathbb P^3\) at a torus-fixed point.

\section{Preliminaries}\label{sec:preliminaries}

\subsection{Toric notation and the weight convention}

We use standard notation from toric geometry; see~\cites{Cox, Fulton}.

Let $N$ be a lattice of rank $n$, with dual lattice
\[
M=\Hom(N, \Z).
\]
The natural pairing is denoted by
\[
\langle -,- \rangle \colon M \times N\longrightarrow\Z.
\]
Set
\[
N_{\R}= N\otimes_{\Z}\R, \qquad N_{\C}=N\otimes_{\Z}\C,\qquad  M_{\R}=M\otimes_{\Z}\R.
\]
Given a smooth complete fan $\Delta$ in $N_{\R}$, let $X_\Delta$ be its associated toric variety. For each cone $\sigma \in \Delta$, denote by
\[
U_\sigma=\Spec\C [\sigma^\vee \cap M]
\]
the associated affine toric variety.
Note that the collection of affine open subsets $U_\sigma$ associated with $n$-dimensional cones $\sigma \in \Delta(n)$ is an open cover of $X_\Delta$.

The algebraic torus associated with $N$ is
\[
T= N\otimes_{\Z} \C^\ast  \cong (\C^\ast)^n.
\]
For each $u \in M$, let
\[
\chi^u\colon T \longrightarrow \C^\ast
\]
be the corresponding character, which can also be considered as a rational function on $X_\Delta$.

Since $X_\Delta$ is complete, it is proper. By GAGA, algebraic global sections of coherent sheaves agree with holomorphic global sections on the associated analytic variety. We therefore refer to
the elements of
\[
H^0\!\left(X_\Delta,\bigwedge\nolimits^kT_{X_\Delta}\right)
\]
as holomorphic polyvector fields.

The infinitesimal torus action induces an injection from the Lie algebra $N_\C$ of $T$ to the space of holomorphic vector fields
\[
\rho_N \colon N_{\C} \longrightarrow  H^0(X_\Delta,T_{X_\Delta})
\]
defined by
\[
\rho_N(x)(\chi^u) = \langle u, x \rangle \chi^u,
\]
for all $x \in N_\C$ and $u \in M$.
By abuse of notation, we denote its exterior extension by the same symbol:
\[
\rho_N \colon \bigwedge\nolimits^k N_{\C} \longrightarrow
H^0\!\left(X_\Delta, \bigwedge\nolimits^k T_{X_\Delta} \right).
\]
This exterior extension is also injective. Indeed, at every point of the dense torus, the infinitesimal action identifies $N_{\C}$ with the tangent space of the torus. Its $k$-th exterior power is therefore injective. Since a global section that vanishes on the dense torus vanishes identically, the displayed map is injective.

The algebraic torus $T$ also acts naturally on the section space $H^0\left(X_\Delta, \wedge^kT_{X_\Delta}\right)$ by
\begin{equation}\label{eq:section-action}
(t\cdot s)(p) = t_\ast \bigl(s(t^{-1}p)\bigr),
\end{equation}
for all $t \in T, s \in H^0\left(X_\Delta, \wedge^kT_{X_\Delta}\right), p \in X_\Delta$.
Note that the image $\rho_N(\wedge^k N_\C)$ is the space of $T$-invariant holomorphic $k$-vector fields.

For each $u \in M$, let
\[
H^0\!\left( X_\Delta,\bigwedge\nolimits^kT_{X_\Delta} \right)_u \subset H^0\left(X_\Delta, \wedge^kT_{X_\Delta}\right)
\]
be the weight-\(u\) subspace. If $w=\rho_N(\xi)$ for some $\xi \in \wedge^k N_\C$, then
\begin{equation}\label{eq:weight-sign}
t\cdot(\chi^u w) = \chi^{-u}(t)\chi^u w.
\end{equation}
Thus $\chi^u w$ has weight $-u$, i.e.,
\[
\chi^u\rho_N(\xi) \in H^0\!\left(X_\Delta, \bigwedge\nolimits^kT_{X_\Delta} \right)_{-u}.
\]
\begin{remark}\label{rem:weight-sign}
Some references use the opposite convention for the induced torus action on sections. This changes the labels of the weight spaces but not the underlying sections. We use~\eqref{eq:section-action} consistently, and hence retain the minus sign in \eqref{eq:weight-sign}.
\end{remark}

\subsection{The anticanonical polytope}
Let $X_\Delta$ be a smooth complete toric variety.
For every ray $\rho \in \Delta(1)$ in the fan $\Delta$, let $D_\rho$ be the corresponding invariant prime divisor. Then the anticanonical divisor is
\[
-K_{X_\Delta} = \sum_{\rho\in\Delta(1)} D_\rho.
\]
Its associated anticanonical polytope is
\begin{equation}\label{eq:polytope}
P_\Delta = \left\{ u\in M_{\R}\ \middle|\ \langle u, v_\rho \rangle \geq -1
\text{ for every }\rho \in \Delta(1)
\right\}.
\end{equation}

The following is a standard consequence of completeness of the fan
and the polytope construction for torus-invariant divisors.
\begin{proposition}[\cites{Cox, Fulton}]\label{lem:interior-origin}
The set $P_\Delta$ is a full-dimensional compact rational polytope.
Moreover, the origin $0$ is the only lattice point in the interior of \(P_\Delta\).
\end{proposition}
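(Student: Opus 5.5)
The plan is to prove the three claims in the order compactness, full-dimensionality, interior lattice points. Each claim uses only that \(\Delta\) is smooth and complete, so that the primitive generators \(v_\rho\) positively span \(N_\R\) and every maximal cone is generated by a \(\Z\)-basis of \(N\).

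\emph{Polytope and compactness.} By definition \eqref{eq:polytope}, \(P_\Delta\) is cut out by finitely many rational half-spaces, so it is a rational polyhedron. To see it is bounded, I will use completeness: \(|\Delta|=N_\R\), so every nonzero \(x\in N_\R\) lies in some cone and is a nonnegative combination of the rays. Applying this to \(-e_j\), for each basis vector \(\pm e_j\) of \(N\), gives
\[
-e_j=\sum_\rho a_\rho v_\rho, \qquad a_\rho\geq 0.
\]
For \(u\in P_\Delta\) this yields
\[
\langle u,e_j\rangle=-\sum_\rho a_\rho\langle u,v_\rho\rangle\leq \sum_\rho a_\rho .
\]
Running the same argument with \(+e_j\) bounds \(\langle u,e_j\rangle\) from below. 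Hence \(P_\Delta\) is bounded, and so it is a compact rational polytope.

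\emph{Full-dimensionality.} The origin satisfies \(\langle 0,v_\rho\rangle=0>-1\) strictly for every ray. A small ball around \(0\) therefore lies in \(P_\Delta\), so \(P_\Delta\) is full-dimensional and \(0\) is an interior point.

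\emph{Uniqueness of the interior lattice point.} This is the step needing an actual idea. The interior of \(P_\Delta\) is \(\{u \mid \langle u,v_\rho\rangle>-1 \text{ for all } \rho\}\). A lattice point \(u\in M\) has integer pairings, so \(u\) is interior exactly when \(\langle u,v_\rho\rangle\geq 0\) for all \(\rho\). Since the \(v_\rho\) positively span \(N_\R\) by completeness, \(u\) is nonnegative on all of \(N_\R\), and therefore \(u=0\). Concretely, writing \(-x=\sum_\rho a_\rho v_\rho\) with \(a_\rho\ge0\) gives \(\langle u,x\rangle\ge0\) and \(\langle u,-x\rangle\ge0\) for every \(x\), so \(u=0\).

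The main point to be careful about is exactly this last equivalence, that the interior is given by the strict inequalities. It holds because \(P_\Delta\) is full-dimensional and defined by finitely many inequalities, so its interior is the set where all of them are strict. Smoothness is not even needed for this proposition; completeness suffices.
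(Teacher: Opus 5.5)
Your proof is correct and follows the paper's argument for full-dimensionality (the origin satisfies every defining inequality strictly) and for uniqueness of the interior lattice point (integrality turns $\langle u,v_\rho\rangle>-1$ into $\langle u,v_\rho\rangle\geq 0$, and completeness then forces $u=0$). The only difference is the boundedness step: you bound each dual coordinate $\langle u,e_j\rangle$ explicitly by writing $\pm e_j$ as nonnegative combinations of the $v_\rho$, whereas the paper argues by contradiction from a nonzero recession direction $a$ with $\langle a,v_\rho\rangle\geq 0$; both rest on the same completeness input, but your version is slightly more elementary because it does not use the fact that an unbounded polyhedron has a nonzero recession cone.
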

\begin{proof}
The set \(P_\Delta\) is an intersection of finitely many rational closed half-spaces. Since
\[
\langle 0, v_\rho\rangle = 0 >-1
\]
for every ray $\rho \in \Delta(1)$, the origin $0$ belongs to its interior.

Suppose that \(P_\Delta\) were unbounded. Its recession cone would contain a nonzero element \(a\in M_{\RR}\) satisfying
\[
\Pair{a}{v_\rho}\geq 0 \qquad \text{for every }\rho\in\Delta(1).
\]
Since \(\Delta\) is complete, every \(x\in N_{\R}\) belongs to a cone of \(\Delta\), and hence is a nonnegative linear combination of primitive ray generators. It follows that
\[
\Pair{a}{x} \geq 0 \qquad  \text{for every }x\in N_{\R}.
\]
Applying this to \(x\) and \(-x\) gives \(a=0\), a contradiction.
Thus \(P_\Delta\) is bounded.

Finally, for any lattice point \(u\in M\cap\operatorname{int}(P_\Delta)\) in the interior $\operatorname{int}(P_\Delta)$ of $P_\Delta$, we have
\[
\Pair{u}{v_\rho}>-1
\]
for all \(\rho\in\Delta(1)\). Since the pairings are integral, it follows that
\[
\Pair{u}{v_\rho} \geq 0.
\]
Since the fan $\Delta$ is complete, every $x \in N_\R$ is a nonnegative linear combination of primitive ray generators, which implies that
\[
  \Pair{u}{x} \geq 0.
\]
Applying this relation to both $x$ and $-x$ implies that \(u=0\).
\end{proof}

For \(u\in P_\Delta\), define
\[
A(u) = \left\{ \rho\in\Delta(1) \ \middle|\   \Pair{u}{v_\rho}=-1 \right\}
\]
and
\[
L_u = \Span_{\C}\{v_\rho\mid\rho\in A(u)\}.
\]
The next lemma is the standard active-inequality description of the normal space of a face of a polytope; see, for example, \cite{Ziegler}*{Chapter 2}, especially the discussion of faces, supporting hyperplanes, and active inequalities.

\begin{lemma}\label{lem:normal-space}
Given a face \(F\preceq P_\Delta\) of $P_\Delta$ and a point \(u\in\relint(F)\) of its relative interior, the vector space \(L_u\) is the complexification of the normal space of \(F\). In particular,
\[
\dim_{\C} L_u= \codim F.
\]
\end{lemma}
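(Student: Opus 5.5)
We prove Lemma~\ref{lem:normal-space} by the usual active-inequality argument, applied to the irredundant-or-not inequality description~\eqref{eq:polytope}. Write $P_\Delta=\{u\in M_\R \mid \Pair{u}{v_\rho}\geq -1,\ \rho\in\Delta(1)\}$. By Proposition~\ref{lem:interior-origin} it is a full-dimensional polytope. Fix a face $F$ and $u\in\relint(F)$. The plan has three steps.
\begin{enumerate}
\item Show that the set of active inequalities $A(u)$ does not depend on the choice of $u\in\relint(F)$. Moreover, show that
\[
F=\{w\in P_\Delta \mid \Pair{w}{v_\rho}=-1 \text{ for all } \rho\in A(u)\}.
\]
\item Deduce that the affine hull of $F$ is the affine subspace cut out by these equalities.
\item Take orthogonal complements to identify $L_u\cap N_\R$ with the normal space of $F$.
\end{enumerate}

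For step (1), let $G$ denote the right-hand side; it is clearly a face of $P_\Delta$ containing $u$. First take $w\in F$. Since $u\in\relint(F)$, the point $u-\varepsilon(w-u)$ lies in $F$ for small $\varepsilon>0$. For $\rho\in A(u)$, the function $t\mapsto \Pair{u+t(w-u)}{v_\rho}$ is affine in $t$, equals $-1$ at $t=0$, and is $\geq -1$ on both sides of $0$. Hence it is constant, so $\Pair{w}{v_\rho}=-1$ and $F\subseteq G$. Conversely, $F$ is cut out from $P_\Delta$ by a supporting hyperplane $\{\Pair{\cdot}{y}=c\}$ with $\Pair{P_\Delta}{y}\geq c$. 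By Farkas' lemma (LP duality), $y$ is a nonnegative combination of the $v_\rho$ active at the face; with $u\in\relint F$, these are exactly the $\rho\in A(u)$. Alternatively, one can argue directly: near $u$, the inactive inequalities are strict. So any $w\in G$ gives points $u+\varepsilon(w-u)\in G$, and these lie on the supporting hyperplane because $u$ minimizes $\Pair{\cdot}{y}$ in the relative interior. This yields $G\subseteq F$.

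For step (2), set
\[
E=\{w\in M_\R \mid \Pair{w}{v_\rho}=-1 \text{ for all } \rho\in A(u)\}.
\]
Since all inactive inequalities are strict at $u$, a small ball around $u$ in $E$ lies in $P_\Delta$, hence in $G=F$. Therefore $\operatorname{aff}(F)=E$. For step (3), the linear space parallel to $E$ is $\{a\in M_\R \mid \Pair{a}{v_\rho}=0,\ \rho\in A(u)\}$. By definition, the normal space of $F$ is its annihilator in $N_\R$, namely $\Span_\R\{v_\rho \mid \rho\in A(u)\}$. Complexifying gives $L_u$, and $\dim_\C L_u=n-\dim F=\codim F$.

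The main obstacle is step (1): ensuring that every inequality tight at a relative-interior point is tight on all of $F$, and that no extra equalities are needed. The line-segment argument handles the first half. For the converse, I would use the local-ball argument rather than Farkas, to keep the proof self-contained.
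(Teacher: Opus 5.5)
Your proposal is correct and uses the same active-inequality argument as the paper. The paper only asserts it, with a pointer to Ziegler; you supply the missing details, in particular Step 2 ($\operatorname{aff}(F)$ is cut out exactly by the active equalities), which is what makes the active normals span the normal space rather than merely lie in it. In your preferred local-ball version of $G\subseteq F$, state explicitly that strictness of the inactive inequalities is needed for the point $u-\varepsilon(w-u)$, not for $u+\varepsilon(w-u)$ (which lies in $G$ by convexity), so that $u$ is an interior point of a segment in $P_\Delta$ on which $\Pair{\cdot}{y}$ attains its minimum and is therefore constant.
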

\begin{proof}
At a point in the relative interior of \(F\), the active inequalities in \eqref{eq:polytope} are precisely the supporting equalities whose supporting hyperplanes contain \(F\). Their normal
vectors are the vectors \(v_\rho\) with \(\rho\in A(u)\). They therefore span the normal space of \(F\), whose dimension is \(\codim F\).
\end{proof}

\subsection{Klyachko filtrations}\label{subsec:klyachko}

In~\cite{Klyachko}, Klyachko classified equivariant vector
bundles on toric varieties in terms of compatible filtrations. The tangent-bundle filtration is a standard example, and the exterior-power filtration is obtained by the usual
functorial construction \cites{Gonzalez, Klyachko}.

Let \(\cE\) be a \(T\)-equivariant vector bundle on \(X_\Delta\), and let \(E\) be its fiber over the identity of the dense torus. For every ray \(\rho\in\Delta(1)\), Klyachko associates a decreasing filtration
\[
\cdots\supseteq E^\rho(j) \supseteq E^\rho(j+1) \supseteq\cdots,   \qquad j\in \Z,
\]
satisfying a compatibility condition on each cone~\cite{Klyachko}.

Under the convention of Remark~\ref{rem:weight-sign}, the global-section formula takes the following form.
\begin{theorem}[Klyachko~\cite{Klyachko}*{Corollary 4.1.3}]\label{thm:klyachko}
Let \(\cE\) be a \(T\)-equivariant vector bundle on a complete toric
variety \(X_\Delta\), with Klyachko data
\[
\left(E,\{E^\rho(j)\}_{\rho,j}\right).
\]
For every \(u\in M\),
\begin{equation}\label{eq:klyachko}
H^0(X_\Delta,\cE)_{-u} \cong \bigcap_{\rho\in\Delta(1)}
E^\rho\!\left(-\Pair{u}{v_\rho}\right).
\end{equation}
\end{theorem}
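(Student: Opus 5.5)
The plan is to reduce the global statement to a local one along each torus-invariant divisor $D_\rho$, and then intersect the resulting conditions over all rays.

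\emph{Step 1: describe weight vectors on the torus.} Over the dense torus $T\subset X_\Delta$, the $T$-action trivializes $\cE$ equivariantly. Each vector $e\in E$, the fiber at the identity, spreads by translation to a $T$-invariant section $\tilde e$ of $\cE|_T$, and these give $\cE|_T\cong E\otimes\mathcal O_T$. Hence $H^0(T,\cE)=E\otimes\C[M]$. With the action~\eqref{eq:section-action}, the computation behind~\eqref{eq:weight-sign} shows that the weight-$(-u)$ part of $H^0(T,\cE)$ is exactly $\{\chi^u\tilde e\mid e\in E\}\cong E$.

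Since $X_\Delta$ is irreducible and $\cE$ is locally free, restriction $H^0(X_\Delta,\cE)\to H^0(T,\cE)$ is injective and $T$-equivariant. So $H^0(X_\Delta,\cE)_{-u}$ is identified with the subspace of those $e\in E$ for which $\chi^u\tilde e$ extends to a regular section on all of $X_\Delta$.

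\emph{Step 2: reduce extension to codimension one.} Let $X_1=T\cup\bigcup_{\rho}O(\rho)=\bigcup_{\rho\in\Delta(1)}U_\rho$, the union of the orbits of dimension $\geq n-1$. Its complement in the normal variety $X_\Delta$ has codimension at least $2$. By the algebraic Hartogs property for sections of a locally free sheaf on a normal variety, a section over $X_1$ extends uniquely to $X_\Delta$. It therefore suffices to decide, for each ray $\rho$ separately, when $\chi^u\tilde e$ extends over $U_\rho\cong\C\times(\C^\ast)^{n-1}$.

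\emph{Step 3: the local computation on $U_\rho$ (the main obstacle).} Here I would recall Klyachko's construction of the filtration. Over $U_\rho$, the equivariant bundle splits $T$-equivariantly as a sum of line bundles $\mathcal O_{U_\rho}\cdot\chi^{m_i}$ attached to characters $m_i$. The filtration is defined by
\[
E^\rho(j)=\Span\{e\in E\mid \chi^{m}\tilde e \text{ is regular on } U_\rho \text{ for some, equivalently all, } m \text{ with } \langle m,v_\rho\rangle=-j\},
\]
with signs matched to the convention of Remark~\ref{rem:weight-sign}. Regularity of $\chi^m\tilde e$ along $D_\rho$ depends only on $\langle m,v_\rho\rangle$, because $\chi^{m'}$ with $\langle m',v_\rho\rangle=0$ is a unit on $U_\rho$. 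Multiplying by $\chi^{m}$ with $\langle m,v_\rho\rangle=1$ shows that the filtration is decreasing in $j$.

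From this definition, $\chi^u\tilde e$ is regular on $U_\rho$ if and only if $e\in E^\rho(-\langle u,v_\rho\rangle)$. The delicate point is bookkeeping: the definition of the filtration, the direction of the action~\eqref{eq:section-action}, and the sign in~\eqref{eq:weight-sign} must all be aligned. I would check this alignment on the trivial line bundle $\mathcal O$. There $E^\rho(j)=\C$ for $j\le0$ and $0$ for $j>0$, and the formula must reproduce the known fact that $\chi^u$ is regular on $U_\rho$ exactly when $\langle u,v_\rho\rangle\ge0$.

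\emph{Step 4: conclude.} Intersecting the conditions of Step 3 over all $\rho\in\Delta(1)$, and combining with Steps 1 and 2, gives~\eqref{eq:klyachko}. Completeness of the fan enters only in that $X_\Delta$ is then proper, so the weight spaces are finite-dimensional and together make up all of $H^0$. The identification itself holds for any fan.
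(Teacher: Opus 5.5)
Your argument is correct, but it does not follow the paper, because the paper gives no proof of this statement. It quotes Klyachko's Corollary~4.1.3. In Section~\ref{sec:affine} (Proposition~\ref{prop:affine-regularity}) it then cross-checks the sign convention by a pole computation on the maximal-cone charts $U_\sigma\cong\C^n$, and only for $\bigwedge^k T_{X_\Delta}$.

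You give the standard self-contained derivation, essentially Payne's formulation of Klyachko's theory. You define $E^\rho(j)$ by regularity of $\chi^m\tilde e$ on $U_\rho$, which makes the per-ray condition tautological. You then pass from $\bigcup_\rho U_\rho$ to $X_\Delta$ by Hartogs extension across a complement of codimension at least two in a normal variety. This gains three things:
\begin{itemize}
\item The sign convention \eqref{eq:weight-sign} is built in rather than imported.
\item The formula holds for an arbitrary fan.
\item You need neither the compatibility condition on cones nor the equivariant splitting on $U_\sigma$. The splitting you mention in Step~3 plays no role in your argument.
\end{itemize}

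The price is that Step~3 is a definition, not a lemma. If one takes Klyachko's original construction of the filtration, one still has to check that it agrees with yours. Your trivial-bundle test only fixes the sign; it does not prove that equivalence. For this paper the point is harmless. The filtration the paper actually uses (Proposition~\ref{prop:tangent-filtration}) is justified by exactly your regularity criterion, and your definition reproduces \eqref{eq:tangent-filtration} and Remark~\ref{rem:k-zero}.

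A minor correction to Step~4 concerns what completeness actually does. It is not what makes the weight spaces exhaust $H^0$, since that holds for any rational $T$-action. It is also not what makes each weight space finite-dimensional, since each weight space embeds in $E$. Completeness only guarantees that finitely many weights occur.
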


\begin{remark}
The sign in \eqref{eq:klyachko} is determined by the convention \eqref{eq:section-action}. With the opposite convention for the action on sections, the same eigensection would be labelled by the
opposite weight. The affine-coordinate calculation in Section~\ref{sec:affine} provides an independent verification of this indexing and sign convention.
\end{remark}

Note that the fiber of the tangent bundle $T_{X_{\Delta}}$ over the identity of the dense torus
is naturally identified with \(N_{\C}\). We have the following standard Klyachko filtration of the tangent bundle.
\begin{proposition}[Klyachko\cite{Klyachko}]\label{prop:tangent-filtration}
Under the decreasing-filtration convention used here, the Klyachko
filtration of \(T_{X_\Delta}\) associated with a ray \(\rho\in\Delta(1)\) is
\begin{equation}\label{eq:tangent-filtration}
E^\rho(j) =
\begin{cases}
N_{\C},&j\leq0,\\[2pt]
\C v_\rho,&j=1,\\[2pt]
0,&j\geq2.
\end{cases}
\end{equation}
\end{proposition}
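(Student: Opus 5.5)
The plan is to compute the filtration directly from its definition in Klyachko's formalism, working on the affine chart $U_\rho$ attached to the ray $\rho$. In the convention fixed by Theorem~\ref{thm:klyachko}, $E^\rho(j)$ is the set of values at the identity of the dense torus of the $T$-eigensections of $\cE|_{U_\rho}$ of weight $-u$, where $u$ ranges over $M$ with $\Pair{u}{v_\rho}=-j$. This set is independent of the particular such $u$, since multiplying by a character $\chi^{u'}$ with $\Pair{u'}{v_\rho}=0$ is invertible on $U_\rho$. This is exactly the normalization for which the global formula \eqref{eq:klyachko} holds: a global section of weight $-u$ is a section that is regular on every $U_\rho$. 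Restricting to the rays is enough because $X_\Delta$ is smooth, so the complement of $\bigcup_\rho U_\rho$ has codimension at least two.

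\emph{Setting up coordinates.} Since $v_\rho$ is primitive, I will choose a basis $e_1=v_\rho,e_2,\dots,e_n$ of $N$ with dual basis $m_1,\dots,m_n$ of $M$. Then
\[
U_\rho=\Spec\C[x_1,x_2^{\pm1},\dots,x_n^{\pm1}]\cong\C\times(\C^\ast)^{n-1},\qquad x_i=\chi^{m_i}.
\]
By the defining formula for $\rho_N$, we have $\rho_N(e_i)=x_i\partial_{x_i}$. Hence $T_{U_\rho}$ is freely generated by $\partial_{x_1}=x_1^{-1}\rho_N(e_1)$ and by $\rho_N(e_2),\dots,\rho_N(e_n)$. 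By \eqref{eq:weight-sign}, every $T$-eigensection of $T_{X_\Delta}$ over the dense torus with weight $-u$ has the form $\chi^u\rho_N(\xi)$ with $\xi\in N_\C$. Its value at the identity is $\xi$ under the identification of the fiber with $N_\C$.

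\emph{Regularity test.} Write $u=\sum a_im_i$, so that $a_1=\Pair{u}{v_\rho}=-j$, and write $\xi=\sum\xi_ie_i$. Then
\[
\chi^u\rho_N(\xi)=x_1^{a_1+1}\Bigl(\prod_{i\ge2}x_i^{a_i}\Bigr)\xi_1\,\partial_{x_1}+x_1^{a_1}\Bigl(\prod_{i\ge2}x_i^{a_i}\Bigr)\sum_{i\ge2}\xi_i\,\rho_N(e_i).
\]
This section is regular on $U_\rho$ exactly when the following case condition holds.
\begin{itemize}
\item If $j\le0$, so $a_1\ge0$, it is regular for every $\xi$, giving $E^\rho(j)=N_\C$.
\item If $j=1$, so $a_1=-1$, it is regular iff $\xi_i=0$ for all $i\ge2$, giving $E^\rho(1)=\C v_\rho$.
\item If $j\ge2$, so $a_1\le-2$, it is regular only when $\xi=0$, giving $E^\rho(j)=0$.
\end{itemize}
These three cases are precisely \eqref{eq:tangent-filtration}.

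The computation itself is routine. The main obstacle is the bookkeeping of conventions: one must check that Klyachko's definition of $E^\rho(j)$, once transported to the action \eqref{eq:section-action}, really uses sections of weight $-u$ with $\Pair{u}{v_\rho}=-j$ rather than the opposite sign. I would settle this by checking consistency with Theorem~\ref{thm:klyachko} on a known case, the line bundle $\bigwedge^nT_{X_\Delta}\cong\mathcal O(\sum_\rho D_\rho)$. Its sections must be indexed by $P_\Delta\cap M$, which forces the jump of the filtration to occur at $j=1$, as obtained above.
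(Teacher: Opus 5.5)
Your proof is correct, but it reaches the filtration by a somewhat different mechanism than the paper.

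The paper chooses a maximal cone $\sigma\ni\rho$. It uses the coordinates $z_j=\chi^{u_j}$ on $U_\sigma\cong\C^n$ and the equivariant frame $\partial/\partial z_j=\chi^{-u_j}\rho_N(v_j)$, and reads the filtration off this equivariant splitting. Implicitly this is Klyachko's compatibility condition on $\sigma$: the frame vector $\partial/\partial z_\ell$ has weight $u_\ell$ and value $v_\ell$ at the identity. So the line $\C v_\ell$ sits at index $\Pair{u_\ell}{v_i}=\delta_{\ell i}$ in the filtration of the ray through $v_i$.

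You instead work on the one-ray chart $U_\rho\cong\C\times(\C^\ast)^{n-1}$ and compute $E^\rho(j)$ directly from its definition. Your explicit pole test is the $k=1$ case of Proposition~\ref{prop:affine-regularity}, moved from $U_\sigma$ to $U_\rho$.

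What each route buys:
\begin{itemize}
\item Your route needs neither a maximal cone nor the compatibility axiom. It makes the sign and index convention explicit and visibly consistent with Theorem~\ref{thm:klyachko}. That is exactly the step the paper's phrase ``the corresponding equivariant splitting shows'' leaves to the reader.
\item The paper's route gives at once the filtrations for all rays of $\sigma$, together with the compatible splitting.
\end{itemize}

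One small point: \eqref{eq:weight-sign} only says that $\chi^u\rho_N(\xi)$ has weight $-u$. You also use the converse, that every weight-$(-u)$ eigensection over the torus has this form. This holds because $\rho_N(e_1),\dots,\rho_N(e_n)$ is a $T$-invariant frame of the tangent bundle of the torus. Hence the coefficients of an eigensection in this frame must all be scalar multiples of $\chi^u$.
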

\begin{proof}
Choose a maximal cone \(\sigma\in\Delta(n)\) containing \(\rho\). Since \(\Delta\) is smooth, the primitive ray generators
\[
v_1,\dots,v_n
\]
of \(\sigma\) form a basis of \(N\). Let
\[
u_1,\dots,u_n
\]
be the dual basis of \(M\), and put
\[
z_j=\chi^{u_j}.
\]
Then
\[
U_\sigma\cong\C^n, \qquad \rho_N(v_j) = z_j\frac{\partial}{\partial z_j}.
\]
Equivalently,
\[
\frac{\partial}{\partial z_j} = \chi^{-u_j}\rho_N(v_j).
\]
The corresponding equivariant splitting shows that the line \(\C v_j\) occurs at filtration index \(1\), while the remaining directions occur at index \(0\). This gives \eqref{eq:tangent-filtration}.
\end{proof}

Klyachko filtrations are compatible with the usual functorial operations on equivariant vector bundles; see \cites{Gonzalez,Klyachko}.
Specializing to exterior powers \(\bigwedge^kT_{X_\Delta}\) of the tangent bundle \(T_{X_\Delta}\) gives rise to the following
\begin{proposition}[Standard exterior-power filtration]\label{prop:exterior-filtration}
For all \(1\leq k\leq n\), the Klyachko filtration of \(\bigwedge^kT_{X_\Delta}\) associated with the ray \(\rho\) is
\[
E_k^\rho(j) =
\begin{cases}
\bigwedge^kN_{\C},&j\leq0,\\[4pt]
v_\rho\wedge\bigwedge^{k-1}N_{\C},&j=1,\\[4pt]
0,&j\geq2.
\end{cases}
\]
\end{proposition}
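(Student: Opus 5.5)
Since Klyachko filtrations are compatible with exterior powers (Klyachko, Gonzalez), I will compute the filtration of \(\bigwedge^k T_{X_\Delta}\) directly from Proposition~\ref{prop:tangent-filtration}. For a decreasing filtration \(E^\rho(\bullet)\) on \(E=N_{\C}\), the induced filtration on \(\bigwedge^k E\) is
\[
\Bigl(\bigwedge\nolimits^k E\Bigr)^\rho(j)=\sum_{j_1+\cdots+j_k=j} E^\rho(j_1)\wedge\cdots\wedge E^\rho(j_k).
\]
This is the filtration whose associated graded is \(\bigwedge^k\) of the associated graded.

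The plan is to evaluate this sum in three cases. The only nonzero steps of \(E^\rho\) are \(N_{\C}\) (for \(j\le 0\)) and \(\C v_\rho\) (for \(j=1\)).

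\emph{Case \(j\le 0\).} Take \(j_1=j\) and \(j_2=\cdots=j_k=0\). Every factor is then \(N_{\C}\), so the sum is \(\bigwedge^k N_{\C}\).

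\emph{Case \(j\ge 2\).} Some \(j_i\ge 2\) makes that factor zero. Otherwise at least two of the \(j_i\) equal \(1\), and then the corresponding factors are both \(\C v_\rho\). Their wedge is \(v_\rho\wedge v_\rho=0\). Hence the filtration step vanishes.

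\emph{Case \(j=1\).} If some \(j_i\ge 2\), the term is zero. If two indices are \(\geq 1\), the term is again zero as above. So the only surviving terms have exactly one \(j_i=1\) and the rest \(\le 0\). Each such term is \(\C v_\rho\wedge\bigwedge^{k-1}N_{\C}=v_\rho\wedge\bigwedge^{k-1}N_{\C}\), and summing gives this same space.

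To guard against convention mismatches, I will cross-check the result in local coordinates, following the proof of Proposition~\ref{prop:tangent-filtration}. Choose a maximal cone \(\sigma\ni\rho\) with basis \(v_1=v_\rho,v_2,\dots,v_n\) and coordinates \(z_j\), so that \(\partial/\partial z_j=\chi^{-u_j}\rho_N(v_j)\). For a multi-index \(I\), the equivariant frame \(\partial_{z_I}\) of \(\bigwedge^k T\) equals \(\chi^{-\sum_{i\in I}u_i}\rho_N(v_I)\). Its \(\rho\)-index is \(1\) exactly when \(1\in I\), and \(0\) otherwise. The resulting filtration jumps from \(\bigwedge^k N_{\C}\) to the span of the \(v_I\) with \(1\in I\), which is \(v_\rho\wedge\bigwedge^{k-1}N_{\C}\), and then to \(0\) at index \(2\). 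This agrees with the functorial computation.

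The main obstacle is bookkeeping rather than mathematics. I need to confirm that the functorial rule for \(\bigwedge^k\) uses sums over \(j_1+\cdots+j_k=j\) under the decreasing convention with the sign of Theorem~\ref{thm:klyachko}. The local frame check settles this independently.
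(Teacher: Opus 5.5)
Your proof is correct and follows the paper's argument. Both use the compatibility of Klyachko filtrations with exterior powers, together with the fact that \(v_\rho\) occurs at most once in a nonzero wedge: the index therefore never exceeds \(1\), and the index-\(1\) piece is \(v_\rho\wedge\bigwedge^{k-1}N_{\C}\). You write out the sum over \(j_1+\cdots+j_k=j\) that the paper leaves implicit, and your local-frame cross-check with index \(\langle u_I,v_\rho\rangle\) for \(\partial_{z_I}=\chi^{-u_I}\rho_N(v_I)\) is a consistent extra verification matching the proof of Proposition~\ref{prop:tangent-filtration} and Section~\ref{sec:affine}.
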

\begin{proof}
Note that the filtration \eqref{eq:tangent-filtration} has only two nonzero graded levels. Its positive graded part is the one-dimensional subspace \(\C v_\rho\). A nonzero exterior monomial can contain
\(v_\rho\) at most once. Therefore the largest possible filtration index in \(\bigwedge^kN_{\C}\) is \(1\).

The index-\(1\) filtered subspace consists precisely of exterior products containing \(v_\rho\), namely
\[
v_\rho\wedge\bigwedge\nolimits^{k-1}N_{\C}.
\]
\end{proof}

\begin{remark}\label{rem:k-zero}
For \(k=0\), the relevant bundle is
\[
\bigwedge\nolimits^0T_{X_\Delta} = \mathcal{O}_{X_\Delta}.
\]
Its filtration is
\[
\C^\rho(j) =
\begin{cases}
\C,&j\leq0,\\
0,&j\geq1.
\end{cases}
\]
Thus Proposition~\ref{prop:exterior-filtration} is intended only for \(k\geq1\).
\end{remark}

\section{The weight-space decomposition}\label{sec:weights}
In this section, we prove the main theorem via Klyachko filtrations.

Fix \(u\in M\) and \(1\leq k\leq n\). By Theorem~\ref{thm:klyachko} and Proposition~\ref{prop:exterior-filtration},
\[
H^0\!\left(X_\Delta,\bigwedge\nolimits^kT_{X_\Delta} \right)_{-u}
\cong  \bigcap_{\rho\in\Delta(1)} E_k^\rho\!\left(-\Pair{u}{v_\rho}\right),
\]
where
\begin{equation}\label{eq:ray-condition}
E_k^\rho\!\left(-\Pair{u}{v_\rho}\right) =
\begin{cases}
\bigwedge^kN_{\CC},
&\Pair{u}{v_\rho}\geq0,\\[4pt]
v_\rho\wedge\bigwedge^{k-1}N_{\CC},
&\Pair{u}{v_\rho}=-1,\\[4pt]
0,
&\Pair{u}{v_\rho}\leq-2.
\end{cases}
\end{equation}
It follows that $H^0\left(X_\Delta,\wedge^k T_{X_\Delta} \right)_{-u}$ is nonzero only if
\[
u\in P_\Delta\cap M.
\]
For \(u\in P_\Delta\), set
\[
i(u)=\dim_{\CC}L_u.
\]
Consider a subspace of $\wedge^k N_\C$ defined by
\[
N_u^k =
\begin{cases}
\displaystyle
\bigwedge\nolimits^{i(u)}L_u
\wedge
\bigwedge\nolimits^{k-i(u)}N_{\CC},
&i(u)\leq k,\\[6pt]
0,&i(u)>k.
\end{cases}
\]

\begin{proposition}\label{prop:individual-weight}
For every \(u\in M\) and \(1\leq k\leq n\),
\[
H^0\!\left( X_\Delta,\bigwedge\nolimits^kT_{X_\Delta} \right)_{-u}
=
\begin{cases}
\chi^u\rho_N(N_u^k),
&u\in P_\Delta\cap M,\\[4pt]
0,&u\notin P_\Delta.
\end{cases}
\]
\end{proposition}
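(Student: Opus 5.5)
The plan is to compute the intersection in Theorem~\ref{thm:klyachko} ray by ray using \eqref{eq:ray-condition}, and then to transport the resulting subspace of $\bigwedge^k N_\C$ to actual sections.

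\textbf{Case $u\notin P_\Delta$.} Some ray has $\Pair{u}{v_\rho}\leq -2$. Its factor in \eqref{eq:ray-condition} is $0$, so the intersection vanishes.

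\textbf{Case $u\in P_\Delta\cap M$.} Every ray has $\Pair{u}{v_\rho}\geq -1$. Rays with $\Pair{u}{v_\rho}\geq0$ contribute all of $\bigwedge^k N_\C$. Hence the weight space is
\[
\bigcap_{\rho\in A(u)} v_\rho\wedge\bigwedge\nolimits^{k-1}N_\C ,
\]
and if $A(u)=\emptyset$ it is all of $\bigwedge^kN_\C=N_u^k$ (here $i(u)=0$).

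\textbf{Key linear-algebra step.} We must show
\[
\bigcap_{\rho\in A(u)} v_\rho\wedge\bigwedge\nolimits^{k-1}N_\C=\bigwedge\nolimits^{i}L_u\wedge\bigwedge\nolimits^{k-i}N_\C,\qquad i=i(u),
\]
with the right side read as $0$ when $i>k$.

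1. Reduce the intersection to a basis. Since $\xi\in v\wedge\bigwedge^{k-1}N_\C$ iff $v\wedge\xi=0$ (for $v\neq0$), the condition defining the intersection is linear in $v$. So the intersection over all of $A(u)$ equals the intersection over all nonzero $v\in L_u$, which equals the intersection over a basis $w_1,\dots,w_i$ of $L_u$ chosen among the $v_\rho$.

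2. Compute in adapted coordinates. Extend $w_1,\dots,w_i$ to a basis $w_1,\dots,w_n$ of $N_\C$ and expand $\xi$ in wedge monomials $w_J$. The condition $w_a\wedge\xi=0$ says exactly that every monomial with nonzero coefficient contains $w_a$. Therefore $\xi$ lies in the intersection iff every monomial contains all of $w_1,\dots,w_i$, i.e. iff $\xi\in w_1\wedge\cdots\wedge w_i\wedge\bigwedge^{k-i}N_\C=N_u^k$.

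3. Case $i>k$. No $k$-monomial can contain all $i$ vectors, so the intersection is $0$, matching the convention $N_u^k=0$.

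This step is the only real content; I expect it to be routine once the basis is chosen.

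\textbf{Transport to sections (main subtlety).} Theorem~\ref{thm:klyachko} gives only an abstract isomorphism, so we must identify the weight space inside $H^0$ as $\chi^u\rho_N(N_u^k)$. By \eqref{eq:weight-sign}, for any $\xi\in\bigwedge^kN_\C$ the field $\chi^u\rho_N(\xi)$ is a $k$-vector field on the dense torus of weight $-u$. Conversely, every weight $-u$ section restricted to the torus has this form, because there $\bigwedge^kT$ is trivialized by $\rho_N$ of a basis of $\bigwedge^k N_\C$.

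The isomorphism \eqref{eq:klyachko} is realized by $s\mapsto\xi$, where $s|_T=\chi^u\rho_N(\xi)$. Hence the weight space consists of those $\chi^u\rho_N(\xi)$ that extend holomorphically to $X_\Delta$, and this is exactly the condition $\xi\in\bigcap_\rho E_k^\rho(-\Pair{u}{v_\rho})$.

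To justify that this is the identification, rather than merely an abstract isomorphism, I would check it directly in each chart $U_\sigma\cong\C^n$ from the proof of Proposition~\ref{prop:tangent-filtration}, as follows.

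1. Expand $\xi$ in monomials $v_J$ built from the ray generators $v_1,\dots,v_n$ of $\sigma$.

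2. Use $\rho_N(v_j)=z_j\partial_{z_j}$ to rewrite $\chi^u\rho_N(v_J)=\prod_{l}z_l^{\Pair{u}{v_l}}\prod_{j\in J}z_j\,\partial_{z_J}$.

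3. Read off regularity: this is regular iff $\Pair{u}{v_l}+[l\in J]\geq0$ for all $l$. This is precisely the condition from \eqref{eq:ray-condition} for the rays of $\sigma$.

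Since the maximal cones cover $X_\Delta$, this gives the equality, not just an isomorphism.
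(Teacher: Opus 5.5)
Your proposal is correct and follows the paper's proof. You use the same ray-by-ray evaluation of the Klyachko intersection via \eqref{eq:ray-condition}, the same reformulation $\xi\in v_\rho\wedge\bigwedge^{k-1}N_\C \iff v_\rho\wedge\xi=0$, and the same reduction to the span $L_u$; you just carry out the last step explicitly in an adapted basis.

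Your chart-by-chart identification of the weight space with $\chi^u\rho_N(N_u^k)$ is the computation the paper places separately in Proposition~\ref{prop:affine-regularity}, where it serves only as an independent check. Using it inside the proof turns the paper's ``$\cong$'' and its brief remark about multiplying by $\chi^u$ into an honest equality of subspaces. It also makes the argument independent of Theorem~\ref{thm:klyachko}.
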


\begin{proof}
If \(u\notin P_\Delta\), then there is a ray \(\rho\) such that
\[
\Pair{u}{v_\rho}<-1.
\]
Since the pairing is integral,
\[
\Pair{u}{v_\rho}\leq -2.
\]
The corresponding factor in \eqref{eq:ray-condition} is zero, so the weight space vanishes.

Suppose now that \(u\in P_\Delta\cap M\). Rays for which
\[
\Pair{u}{v_\rho}\geq 0
\]
impose no condition. Therefore,
\[
H^0\!\left(X_\Delta,\bigwedge\nolimits^kT_{X_\Delta}\right)_{-u} \cong
\bigcap_{\rho\in A(u)} \left(v_\rho\wedge \bigwedge\nolimits^{k-1}N_{\CC} \right).
\]

For \(\xi\in\bigwedge^kN_{\CC}\), the condition
\[
\xi \in v_\rho\wedge\bigwedge\nolimits^{k-1}N_{\CC}
\]
is equivalent to
\[
v_\rho\wedge\xi=0.
\]
Recall that
\[
  \operatorname{span}_\C\{v_\rho \mid \rho \in A(u)\} = L_u.
\]
It follows that
\[
\bigcap_{\rho\in A(u)} \left(v_\rho\wedge \bigwedge\nolimits^{k-1}N_{\CC} \right) \cong N_u^k.
\]
Multiplication by \(\chi^u\) produces the corresponding global sections, whose weight is \(-u\) by
\eqref{eq:weight-sign}.
\end{proof}

For \(0\leq i\leq n\), denote by
\[
S(\Delta, i) = \bigcup_{\substack{F\preceq P_\Delta\\
                  \codim F=i}} \bigl(\relint(F)\cap M\bigr),
\]
the collection of lattice points lying in the relative interior $\relint(F)$ of some face $F$ of codimension $i$, and put
\[
S_k(\Delta) = \bigcup_{i=0}^{k}S(\Delta,i).
\]
By Proposition~\ref{lem:interior-origin},
\[
S_0(\Delta) = S(\Delta, 0) = \{0\}.
\]
Now we are ready to prove our main theorem.
\begin{theorem}[Weight-space decomposition]\label{thm:main}
For every \(1\leq k\leq n\),
\begin{equation}\label{eq:main-decomposition}
H^0\!\left( X_\Delta,\bigwedge\nolimits^kT_{X_\Delta} \right)
= \bigoplus_{u\in S_k(\Delta)} \chi^u\rho_N(N_u^k).
\end{equation}
The summand indexed by \(u\) is the weight-\((-u)\) subspace.

If \(u\in S(\Delta,i)\), then
\begin{equation}\label{eq:weight-dimension}
\dim_{\CC}\chi^u\rho_N(N_u^k) = \binom{n-i}{k-i}.
\end{equation}
\end{theorem}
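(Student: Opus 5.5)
The plan is to assemble the decomposition from Proposition~\ref{prop:individual-weight} and then compute the dimensions using Lemma~\ref{lem:normal-space}.

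\emph{The decomposition.} Since $T$ is a torus acting algebraically on the finite-dimensional space $H^0(X_\Delta,\bigwedge^kT_{X_\Delta})$, this space is the direct sum of its weight spaces, one for each $-u$ with $u\in M$. By Proposition~\ref{prop:individual-weight}, the weight-$(-u)$ space vanishes unless $u\in P_\Delta\cap M$. When it does not vanish, it equals $\chi^u\rho_N(N_u^k)$, and this is nonzero exactly when $i(u)\le k$. To see the nonvanishing, note that $\bigwedge^{i(u)}L_u$ is a line, and wedging it with $\bigwedge^{k-i(u)}N_\C$ is nonzero as long as $k\le n$. Since $\rho_N$ is injective and multiplication by $\chi^u$ is injective on sections over the dense torus, the image is nonzero as well.

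\emph{Identifying the index set.} Every point of the polytope $P_\Delta$ lies in the relative interior of exactly one face. By Lemma~\ref{lem:normal-space}, a lattice point $u\in\relint(F)$ satisfies $i(u)=\codim F$. Hence the set of $u\in P_\Delta\cap M$ with $i(u)\le k$ is exactly $\bigcup_{i\le k}S(\Delta,i)=S_k(\Delta)$, which gives \eqref{eq:main-decomposition}.

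\emph{The dimension count.} Let $u\in S(\Delta,i)$, so $\dim L_u=i$. Choose a basis $e_1,\dots,e_n$ of $N_\C$ whose first $i$ vectors $e_1,\dots,e_i$ span $L_u$. Then
\[
N_u^k=(e_1\wedge\cdots\wedge e_i)\wedge\bigwedge\nolimits^{k-i}N_\C .
\]
This space has basis $e_1\wedge\cdots\wedge e_i\wedge e_J$, where $J$ runs over the $(k-i)$-subsets of $\{i+1,\dots,n\}$. Equivalently, $N_u^k\cong\bigwedge^{k-i}(N_\C/L_u)$. So $\dim N_u^k=\binom{n-i}{k-i}$.

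Finally, $\xi\mapsto\chi^u\rho_N(\xi)$ is injective: $\rho_N$ is injective on $\bigwedge^kN_\C$ (shown in the preliminaries), and $\chi^u$ is an invertible function on the dense torus, where a section is determined. This gives \eqref{eq:weight-dimension}.

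The only step needing some care is the identification $i(u)=\codim F$, which relies on Lemma~\ref{lem:normal-space}, together with the fact that relative interiors of faces partition $P_\Delta$. The rest is bookkeeping.
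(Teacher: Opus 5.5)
Your proposal is correct and follows essentially the same route as the paper. Proposition~\ref{prop:individual-weight} identifies each weight-\((-u)\) space, Lemma~\ref{lem:normal-space} turns the condition \(i(u)\leq k\) into membership in \(S_k(\Delta)\), and the count \(\binom{n-i}{k-i}\) comes from splitting off \(L_u\) (you use an adapted basis where the paper uses a complement \(Q\)) together with injectivity of \(\xi\mapsto\chi^u\rho_N(\xi)\); you are also more explicit than the paper about the torus weight decomposition of the finite-dimensional section space and about \(P_\Delta\) being partitioned into relative interiors of faces, which the paper uses tacitly.
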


\begin{proof}
By Proposition~\ref{prop:individual-weight}, the component indexed
by \(u\) is nonzero precisely when
\[
u\in P_\Delta\cap M
\quad\text{and}\quad
i(u)\leq k.
\]
If \(u\in\relint(F)\), then
\[
i(u)=\codim F
\]
by Lemma~\ref{lem:normal-space}. Thus the nonzero indices are
exactly the elements of \(S_k(\Delta)\).

Now let \(u\in S(\Delta,i)\). Choose a complement
\[
N_{\CC}=L_u\oplus Q.
\]
Then \(\dim Q=n-i\), and wedge product induces an isomorphism
\[
\bigwedge\nolimits^iL_u
\otimes
\bigwedge\nolimits^{k-i}Q
\longrightarrow
N_u^k.
\]

Since \(\bigwedge^iL_u\) is one-dimensional,
\[
\dim_{\CC}N_u^k = \dim_{\CC}\bigwedge\nolimits^{k-i}Q = \binom{n-i}{k-i}.
\]
Since the exterior extension of \(\rho_N\) is injective, the multiplication by the nonzero
character \(\chi^u\) is also injective on the dense torus. Hence,
\[
\dim_{\CC}\chi^u\rho_N(N_u^k) = \dim_{\CC}N_u^k,
\]
which proves \eqref{eq:weight-dimension}.
\end{proof}

\begin{remark}\label{rem:main-provenance}
The weight-space decomposition in \eqref{eq:main-decomposition} was previously announced in two
unpublished preprints of the first author \cites{HongPoisson, HongPolyvector}. It was subsequently stated, with attribution to \cite{HongPolyvector}, and applied by Deng and Hong \cite{Deng-Hong 21}. In the present formulation, the weight labels follow the convention fixed in \eqref{eq:section-action}, so that \(\chi^u\) times an invariant polyvector field has weight \(-u\).
\end{remark}

As an immediate consequence, we obtain the following dimension formula:
\begin{corollary}\label{cor:dimension}
For \(1\leq k\leq n\),
\[
\dim_{\CC} H^0\!\left( X_\Delta,\bigwedge\nolimits^kT_{X_\Delta} \right)
= \sum_{i=0}^{k} \binom{n-i}{k-i} \sum_{\substack{F\preceq P_\Delta\\ \codim F=i}}
\#\bigl(\relint(F)\cap M\bigr).
\]
\end{corollary}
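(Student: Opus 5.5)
The plan is to take dimensions on both sides of the decomposition \eqref{eq:main-decomposition} in Theorem~\ref{thm:main} and then regroup the lattice points according to the face of \(P_\Delta\) in whose relative interior they lie. The only ingredients are the directness of the sum, the per-weight dimension formula \eqref{eq:weight-dimension}, and the fact that the sets \(S(\Delta,i)\) partition \(S_k(\Delta)\) compatibly with faces.

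The first step is to record that the sum in \eqref{eq:main-decomposition} is direct. The summand indexed by \(u\) is the weight-\((-u)\) subspace, and distinct \(u\) give distinct characters of \(T\), so these weight spaces are independent. Hence
\[
\dim_{\CC} H^0\!\left(X_\Delta,\bigwedge\nolimits^kT_{X_\Delta}\right)=\sum_{u\in S_k(\Delta)}\dim_{\CC}\chi^u\rho_N(N_u^k).
\]
Finiteness of this sum follows because \(P_\Delta\) is compact by Proposition~\ref{lem:interior-origin}, so \(P_\Delta\cap M\) is finite.

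The second step is to split the index set. Since \(P_\Delta\) is a polytope, its relative interiors of faces (including \(P_\Delta\) itself as the codimension-\(0\) face) partition \(P_\Delta\). Each lattice point \(u\) therefore lies in \(\relint(F)\) for exactly one face \(F\). Consequently the union \(S_k(\Delta)=\bigcup_{i=0}^k S(\Delta,i)\) is disjoint, and each \(S(\Delta,i)\) is a disjoint union over faces of codimension \(i\), so that
\[
\# S(\Delta,i)=\sum_{\codim F=i}\#\bigl(\relint(F)\cap M\bigr).
\]
On \(S(\Delta,i)\), every summand has dimension \(\binom{n-i}{k-i}\) by \eqref{eq:weight-dimension}. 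Summing over \(i=0,\dots,k\) then gives the formula.

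I do not expect a real obstacle here. The only point needing care is the disjointness of the face decomposition, which justifies that \(i(u)\) is well defined as \(\codim F\) via Lemma~\ref{lem:normal-space}; this is a standard fact about faces of polytopes.
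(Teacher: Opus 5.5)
Your proposal is correct and follows the paper's proof. You take dimensions in the decomposition \eqref{eq:main-decomposition}, count $S_k(\Delta)$ stratum by stratum using the partition of $P_\Delta$ into relative interiors of faces, and weight each $S(\Delta,i)$ by $\binom{n-i}{k-i}$ from \eqref{eq:weight-dimension}. Your explicit remarks on directness of the sum, finiteness of $P_\Delta\cap M$, and disjointness of the strata only make precise what the paper's two-line argument leaves implicit.
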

\begin{proof}
Note that the number of lattice points in $S_k(\Delta)$ is
\[
\#S_k(\Delta)=\sum_{i=0}^k\sum_{\substack{F\preceq P_\Delta\\ \codim F=i}}
\#\bigl(\relint(F)\cap M\bigr).
\]
Using~\eqref{eq:weight-dimension}, we obtain the desired formula.
\end{proof}

\begin{remark}
For smooth toric Fano varieties, the numerical dimension formula may also be obtained from the Bott-type formulas of Materov~\cite{Materov 02}, using
\[
\bigwedge\nolimits^kT_{X_\Delta} \cong
\Omega_{X_\Delta}^{n-k} \otimes \mathcal{O}_{X_\Delta}(-K_{X_\Delta}).
\]
The filtration calculation additionally retains the individual torus-weight spaces.
\end{remark}

\section{Formal equivariant character}\label{subsec:formal-character}
In this section, we give a generating-function reformulation of Theorem~\ref{thm:main}, which can compactly encode the multiplicities of all torus weights across all exterior degrees.

We start by recalling the standard formal-character notation for rational representations of an algebraic torus; see Jantzen~\cite{Jantzen} and Chriss-Ginzburg~\cite{ChrissGinzburg}.

Let \(V\) be a finite-dimensional rational representation of the algebraic torus \(T\).
Since an algebraic torus is diagonalizable, \(V\) admits a weight-space decomposition
\[
V=\bigoplus_{m\in M}V_m,
\]
where
\[
V_m =
\left\{
v\in V\ \middle|\
t\cdot v=\chi^m(t)v
\text{ for every }t\in T
\right\}.
\]
Let \(\Z[M]\) be the integral group ring of \(M\). It has formal basis elements
\[
\mathbf e^m,\qquad m\in M,
\]
with multiplication
\[
\mathbf e^m\mathbf e^{m'} = \mathbf e^{m+m'}.
\]

\begin{definition}\label{def:formal-character}
The formal \(T\)-character of \(V\) is
\[
\charac_T(V) = \sum_{m\in M} \bigl(\dim_{\C}V_m\bigr)\mathbf e^m \in \Z[M].
\]
\end{definition}

Equivalently, the representation ring of the torus is naturally identified with the group ring
\[
R(T) \cong \Z[M],
\]
and \(\charac_T(V)\) is the image of the class \([V]\in R(T)\) under this identification; see
\cites{ChrissGinzburg, Jantzen}.

\begin{proposition}[Formal equivariant generating function]\label{cor:character}
In the ring \(\Z[M][q]\), one has
\begin{equation}\label{eq:character}
\sum_{k=0}^{n}q^k\, \charac_T H^0\!\left( X_\Delta,\bigwedge\nolimits^kT_{X_\Delta} \right)
= \sum_{u\in P_\Delta\cap M} \mathbf e^{-u} q^{i(u)}(1+q)^{n-i(u)}.
\end{equation}
\end{proposition}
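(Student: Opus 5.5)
The plan is to compute the left-hand side of \eqref{eq:character} weight by weight using Proposition~\ref{prop:individual-weight} and Theorem~\ref{thm:main}, and then exchange the order of summation. For each fixed \(k\), the definition of the formal character together with Proposition~\ref{prop:individual-weight} gives
\[
\charac_T H^0\!\left(X_\Delta,\bigwedge\nolimits^kT_{X_\Delta}\right)
=\sum_{u\in P_\Delta\cap M}\bigl(\dim_{\C}N_u^k\bigr)\mathbf e^{-u},
\]
because the weight-\((-u)\) space is \(\chi^u\rho_N(N_u^k)\) when \(u\in P_\Delta\cap M\) and vanishes otherwise. Here I use the injectivity of \(\rho_N\) and of multiplication by \(\chi^u\) to identify dimensions. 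Every lattice point of the compact polytope \(P_\Delta\) lies in the relative interior of exactly one face, so Lemma~\ref{lem:normal-space} and the dimension count in the proof of Theorem~\ref{thm:main} give \(\dim N_u^k=\binom{n-i(u)}{k-i(u)}\) for \(i(u)\le k\). By the definition of \(N_u^k\), this dimension is \(0\) when \(i(u)>k\). The set \(P_\Delta\cap M\) is finite by Proposition~\ref{lem:interior-origin}, so all sums are finite.

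The \(k=0\) term has to be handled separately, since Proposition~\ref{prop:exterior-filtration} excludes it (Remark~\ref{rem:k-zero}). For \(k=0\) the bundle is \(\mathcal O_{X_\Delta}\), and on a complete variety its global sections are the constants, of weight \(0\). Alternatively, Theorem~\ref{thm:klyachko} with the filtration of Remark~\ref{rem:k-zero} forces \(\langle u,v_\rho\rangle\ge0\) for all \(\rho\), hence \(u=0\). So the character is \(\mathbf e^{0}\). On the right-hand side, the coefficient of \(q^0\) comes only from points with \(i(u)=0\). These are points with \(A(u)=\emptyset\), i.e.\ interior lattice points, and by Proposition~\ref{lem:interior-origin} the only such point is \(u=0\), which contributes \(\mathbf e^{0}\). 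For \(u\neq0\) one has \(i(u)\ge1\), so the formula \(\binom{n-i(u)}{0-i(u)}=0\) agrees with the \(k=0\) contribution. Thus for all \(0\le k\le n\) the coefficient of \(q^k\mathbf e^{-u}\) on the left-hand side is \(\binom{n-i(u)}{k-i(u)}\), with the convention that this is \(0\) for \(k<i(u)\).

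Finally I swap the sums:
\[
\sum_{k=0}^n q^k\sum_{u}\binom{n-i(u)}{k-i(u)}\mathbf e^{-u}
=\sum_{u}\mathbf e^{-u}q^{i(u)}\sum_{j=0}^{n-i(u)}\binom{n-i(u)}{j}q^{j}
=\sum_u\mathbf e^{-u}q^{i(u)}(1+q)^{n-i(u)},
\]
where \(j=k-i(u)\) and the last step is the binomial theorem. The reindexing is valid because \(0\le i(u)\le n\). The only real subtlety is the \(k=0\) boundary case treated in the previous paragraph; the rest is bookkeeping.
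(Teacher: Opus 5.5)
Your proposal is correct and follows essentially the same route as the paper: you read off the weight multiplicities $\binom{n-i(u)}{k-i(u)}$ from Proposition~\ref{prop:individual-weight} and Theorem~\ref{thm:main}, interchange the finite sums over $k$ and $u\in P_\Delta\cap M$, and apply the binomial theorem, handling $k=0$ via $H^0(X_\Delta,\mathcal{O}_{X_\Delta})=\C$ and Proposition~\ref{lem:interior-origin}. Your explicit check that $\binom{n-i(u)}{-i(u)}=0$ for $u\neq 0$ agrees with the $k=0$ term is a slightly more careful version of the paper's closing remark, which is needed because Theorem~\ref{thm:main} is only stated for $k\geq 1$.
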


\begin{proof}
For brevity, put
\[
V_k = H^0\!\left( X_\Delta,\bigwedge\nolimits^kT_{X_\Delta} \right).
\]
By \eqref{eq:weight-dimension} in Theorem~\ref{thm:main}, we have
\[
\dim_{\CC}(V_k)_{-u} = \begin{cases}
\displaystyle
\binom{n-i(u)}{k-i(u)},
&i(u)\leq k,\\[8pt]
0,
&i(u)>k.
\end{cases}
\]
Consequently,
\[
\charac_T(V_k) = \sum_{\substack{u\in P_\Delta\cap M\\i(u)\leq k}}
\binom{n-i(u)}{k-i(u)} \mathbf e^{-u},
\]
which implies that the left-hand side of~\eqref{eq:character} is
\[
\sum_{k=0}^{n} q^k \sum_{\substack{u\in P_\Delta\cap M\\i(u)\leq k}}
\binom{n-i(u)}{k-i(u)} \mathbf e^{-u}.
\]
Since \(P_\Delta\) is compact, \(P_\Delta\cap M\) is finite.
Interchanging the two finite sums gives
\[
\sum_{u\in P_\Delta\cap M} \mathbf e^{-u} \sum_{k=i(u)}^n \binom{n-i(u)}{k-i(u)}q^k.
\]

Fix \(u\), and write \(i=i(u)\). With \(r=k-i\), the inner sum is
\[
\sum_{k=i}^n \binom{n-i}{k-i}q^k = q^i \sum_{r=0}^{n-i} \binom{n-i}{r}q^r = q^i(1+q)^{n-i},
\]
by the binomial theorem. Summing over \(u\) proves~\eqref{eq:character}.

For \(k=0\), only \(u=0\) contributes. Indeed,
\[
H^0(X_\Delta,\mathcal{O}_{X_\Delta})=\C,
\]
and \(0\) is the unique interior lattice point of \(P_\Delta\) by Proposition~\ref{lem:interior-origin}.
\end{proof}

\section{Affine regularity and cancellation of poles}\label{sec:affine}

In this section, we compare the filtration calculation in Section~\ref{sec:weights} with a local-coordinate calculation.

Let \(\sigma\in\Delta(n)\) be a smooth maximal cone with primitive generators
\[
v_1,\dots,v_n.
\]
Let \(u_1,\dots,u_n\) be the dual basis of \(M\), and put
\[
z_j=\chi^{u_j}.
\]
Then
\[
U_\sigma\cong\CC^n, \qquad \rho_N(v_j) = z_j\frac{\partial}{\partial z_j}.
\]

Write
\[
u=\sum_{j=1}^{n}m_ju_j.
\]
Then
\[
m_j=\Pair{u}{v_j}, \qquad \chi^u=z_1^{m_1}\cdots z_n^{m_n}.
\]

\begin{proposition}\label{prop:affine-regularity}
Let
\[
\xi\in\bigwedge\nolimits^kN_{\CC}, \qquad 1\leq k\leq n.
\]
The polyvector field
\[
s=\chi^u\rho_N(\xi)
\]
on the dense torus extends regularly to \(U_\sigma\) if and only if
\begin{compactenum}
\item \(m_j \geq -1\) for every \(j\);
\item whenever \(m_j=-1\), one has
\[
\xi \in v_j\wedge\bigwedge\nolimits^{k-1}N_{\CC}.
\]
\end{compactenum}
\end{proposition}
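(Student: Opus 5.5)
The plan is a direct computation in the coordinates $z_1,\dots,z_n$ on $U_\sigma\cong\CC^n$, expanding $s$ in the standard basis of polyvector fields. The module of regular $k$-vector fields on $U_\sigma$ is free over $\CC[z_1,\dots,z_n]$ with basis $\partial_J=\partial_{z_{j_1}}\wedge\cdots\wedge\partial_{z_{j_k}}$, $J=\{j_1<\dots<j_k\}$. On the torus we have $\rho_N(v_j)=z_j\partial_{z_j}$. Writing
\[
\xi=\sum_J c_J\, v_{j_1}\wedge\cdots\wedge v_{j_k},
\]
we get
\[
s=\sum_J c_J\, z^{m}\, z_J\,\partial_J,\qquad z_J=\prod_{j\in J}z_j .
\]
Each coefficient $c_J z^{m}z_J$ is a Laurent monomial, and distinct $J$ give distinct basis elements. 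Hence $s$ extends regularly if and only if, for every $J$ with $c_J\neq0$, the exponent $m_j+[j\in J]$ is nonnegative for all $j$. Here $[j\in J]$ denotes $1$ if $j\in J$ and $0$ otherwise. Regularity on the normal variety $U_\sigma$ is equivalent to polynomiality of these coefficients, since they are rational functions regular on the torus.

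Next I translate this monomial criterion into conditions (1) and (2). Suppose $m_j\le -2$ for some $j$. Then every exponent at $j$ is negative, so regularity forces all $c_J=0$, i.e.\ $\xi=0$. The statement should be read for $\xi\neq0$, since $\xi=0$ trivially extends. I would note this implicitly or treat nonzero $\xi$, as the paper's filtration argument does. So for $\xi\neq 0$ regularity requires condition (1). Suppose instead $m_j=-1$. The criterion then requires $j\in J$ whenever $c_J\neq0$. By an elementary exterior-algebra fact in the basis $v_1,\dots,v_n$, this is equivalent to $\xi\in v_j\wedge\bigwedge^{k-1}N_\CC$: the latter space is spanned exactly by the basis monomials $v_J$ with $j\in J$ (equivalently, it is the kernel of $v_j\wedge-$, as used in Proposition~\ref{prop:individual-weight}). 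For $m_j\ge0$ there is no constraint.

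Conversely, assume (1) and (2). For each $J$ with $c_J\ne0$, every $j$ satisfies either $m_j\ge0$, or $m_j=-1$ and $j\in J$. In both cases the exponent is $\ge0$, so $s$ is regular.

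The only delicate points are mild. The first is making the degenerate case $\xi=0$ consistent with condition (1). The second is justifying that regular extension is detected coefficientwise in the frame $\partial_J$. The latter holds because $U_\sigma\cong\CC^n$, so a rational function regular on the torus extends to $U_\sigma$ if and only if it is a polynomial. Everything else is a routine monomial bookkeeping step.
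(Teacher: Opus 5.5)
Your proof is correct and follows the paper's argument: you expand $\xi$ in the exterior basis of $v_1,\dots,v_n$, use $\chi^u\rho_N(v_{j_1}\wedge\cdots\wedge v_{j_k})=z^m z_{j_1}\cdots z_{j_k}\,\partial_{z_{j_1}}\wedge\cdots\wedge\partial_{z_{j_k}}$, and read off the exponent conditions coefficientwise. Your caveat is also valid: condition (1) is necessary only when $\xi\neq0$, since $\xi=0$ gives $s=0$, which extends for every $u$ — an assumption the paper's proof makes only implicitly.
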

\begin{proof}
We expand \(\xi\) in the exterior basis determined by \(v_1,\dots,v_n\).
Note that a basis term
\[
v_{j_1}\wedge\cdots\wedge v_{j_k} \in \bigwedge\nolimits^k N_\C
\]
is mapped by $\rho_N$ to
\[
z_{j_1}\cdots z_{j_k} \frac{\partial}{\partial z_{j_1}} \wedge \cdots\wedge
\frac{\partial}{\partial z_{j_k}}.
\]
After the multiplication by \(\chi^u\), its coefficient is
\[
z_1^{m_1}\cdots z_n^{m_n} z_{j_1}\cdots z_{j_k}.
\]
The exponent of \(z_j\) can be increased by at most one because \(v_j\) can occur at most once in a nonzero exterior monomial. Therefore regularity requires \(m_j\geq-1\).

If \(m_j=-1\), every exterior monomial with nonzero coefficient in \(\xi\) must contain \(v_j\) in order to cancel the simple pole \(z_j^{-1}\). This is equivalent to
\[
\xi\in v_j\wedge\bigwedge\nolimits^{k-1}N_{\CC}.
\]
Conversely, these conditions make every coordinate coefficient regular.
\end{proof}

Since
\[
m_j=\Pair{u}{v_j},
\]
condition $(1)$ is the local version of the inequalities defining \(P_\Delta\)  in~\eqref{eq:polytope}, while condition $(2)$ is precisely the index-\(1\) condition in Proposition~\ref{prop:exterior-filtration}.

\begin{remark}
The equality
\[
\Pair{u}{v_\rho}=-1
\]
means that \(\chi^u\) has a simple pole along \(D_\rho\). Requiring the invariant exterior factor to contain \(v_\rho\) supplies the corresponding Euler vector field and cancels this pole. When several
independent divisors are active, all their independent normal directions must occur in the exterior factor.
\end{remark}

\section{Consequences and examples}\label{sec:consequences}

\subsection{Demazure roots}
Consider the \(k=1\) case. The origin $0$ contributes the invariant vector fields
\[
\rho_N(N_{\C}).
\]
A nonzero lattice point contributes only if it belongs to the relative interior of a facet. Such a lattice point \(u\) satisfies
\[
\Pair{u}{v_{\rho_u}}=-1
\]
for a unique ray \(\rho_u\), and
\[
\Pair{u}{v_\rho}\geq 0 \qquad \text{for }\rho\neq\rho_u.
\]
These lattice points are known as the Demazure roots~\cites{Cox, Demazure}.

We now apply Theorem~\ref{thm:main} with \(k=1\).
Note that only faces of codimension zero and one contribute. The origin gives the \(n\)-dimensional space of invariant vector fields, and every lattice point in the relative interior of a facet contributes one dimension.
We immediately recover the following result.
\begin{corollary}[Demazure]\label{cor:demazure}
There is a weight decomposition
\[
H^0(X_\Delta,T_{X_\Delta}) = \rho_N(N_{\CC}) \oplus \bigoplus_{u\in S(\Delta,1)}
\CC\, \chi^u\rho_N(v_{\rho_u}).
\]
The nonzero summand indexed by \(u\) has weight \(-u\).
Consequently,
\[
\dim_{\CC}H^0(X_\Delta,T_{X_\Delta}) = n+\#S(\Delta,1).
\]
\end{corollary}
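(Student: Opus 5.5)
The plan is to specialize Theorem~\ref{thm:main} to \(k=1\) and then identify the summands explicitly. For \(k=1\), the index set is \(S_1(\Delta)=S(\Delta,0)\cup S(\Delta,1)\). By Proposition~\ref{lem:interior-origin}, \(S(\Delta,0)=\{0\}\), so the decomposition \eqref{eq:main-decomposition} becomes
\[
H^0(X_\Delta,T_{X_\Delta}) = \chi^0\rho_N(N_0^1)\oplus\bigoplus_{u\in S(\Delta,1)}\chi^u\rho_N(N_u^1).
\]

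The first step is the origin. Since \(\Pair{0}{v_\rho}=0>-1\) for every ray, we have \(A(0)=\emptyset\), hence \(L_0=0\) and \(i(0)=0\). The definition then gives \(N_0^1=\bigwedge^0 L_0\wedge N_\C=N_\C\), and \(\chi^0=1\). So the origin contributes exactly \(\rho_N(N_\C)\), the space of invariant vector fields.

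The second step handles \(u\in S(\Delta,1)\), that is, \(u\) in the relative interior of a facet \(F\). By Lemma~\ref{lem:normal-space}, \(L_u\) is the complexified normal line of \(F\), so \(i(u)=1\). The main point here is to show that \(A(u)\) is a single ray \(\rho_u\).

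\begin{itemize}
\item Every \(\rho\in A(u)\) has \(v_\rho\in L_u\), a line, so all such \(v_\rho\) are proportional.
\item Primitive lattice vectors that are proportional with the same sign are equal. Two opposite ones cannot both have pairing \(-1\) with \(u\), since their pairings would be \(\pm1\).
\item Distinct rays of a fan have distinct primitive generators, so \(A(u)=\{\rho_u\}\).
\end{itemize}

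It follows that \(L_u=\C v_{\rho_u}\) and \(N_u^1=\bigwedge^1 L_u=\C v_{\rho_u}\). Therefore the summand indexed by \(u\) is \(\C\,\chi^u\rho_N(v_{\rho_u})\). It is nonzero because \(\rho_N\) is injective and \(\chi^u\) is invertible on the torus. It has weight \(-u\) by \eqref{eq:weight-sign}, or directly by Proposition~\ref{prop:individual-weight}.

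The dimension count then follows from \eqref{eq:weight-dimension} with \(k=1\). This gives \(\binom{n}{1}=n\) for \(i=0\) and \(\binom{n-1}{0}=1\) for each point with \(i=1\), so the total is \(n+\#S(\Delta,1)\). The same total is Corollary~\ref{cor:dimension} at \(k=1\).

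The only step needing genuine care is the uniqueness of \(\rho_u\). Everything else is direct substitution into Theorem~\ref{thm:main}.
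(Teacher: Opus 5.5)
Your proposal is correct and follows the paper's own route: you specialize Theorem~\ref{thm:main} to $k=1$, so that the origin contributes $\rho_N(N_\C)$ and each facet-interior lattice point contributes the line $\C\,\chi^u\rho_N(v_{\rho_u})$. Your argument that $A(u)$ consists of a single ray supplies a step the paper only asserts; it can be shortened, since if $v_{\rho'}=c\,v_\rho$ then pairing both sides with $u$ forces $c=1$.
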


\subsection{Anticanonical sections}
For \(k=n = \dim_\C X_{\Delta}\), every lattice point of \(P_\Delta\) contributes one dimension.
If \(u\) lies in a face of codimension \(i\), then by Equation~\eqref{eq:weight-dimension} we have
\[
\dim N_u^n = \binom{n-i}{n-i}=1.
\]
As a consequence, we recover the following
\begin{corollary}\label{cor:top-degree}
There is a decomposition
\[
H^0\!\left( X_\Delta,\bigwedge\nolimits^nT_{X_\Delta} \right) =
\bigoplus_{u\in P_\Delta\cap M} \chi^u\rho_N\!\left(\bigwedge\nolimits^nN_{\CC}\right).
\]
In particular,
\[
\dim_{\CC}H^0(X_\Delta,\bigwedge\nolimits^nT_{X_\Delta})=\#(P_\Delta\cap M).
\]
\end{corollary}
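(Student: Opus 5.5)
The plan is to specialize Theorem~\ref{thm:main} to \(k=n\). By \eqref{eq:main-decomposition}, the space \(H^0(X_\Delta,\bigwedge^nT_{X_\Delta})\) decomposes as \(\bigoplus_{u\in S_n(\Delta)}\chi^u\rho_N(N_u^n)\). So two things must be checked. First, the index set \(S_n(\Delta)\) is all of \(P_\Delta\cap M\). Second, for each such \(u\), the summand \(N_u^n\) equals \(\bigwedge^nN_{\C}\).

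For the index set, note that every point of the polytope \(P_\Delta\) lies in the relative interior of exactly one face, namely the smallest face containing it. Every face has codimension between \(0\) and \(n\). Hence \(P_\Delta\cap M=\bigcup_{i=0}^n S(\Delta,i)=S_n(\Delta)\). Equivalently, by Lemma~\ref{lem:normal-space}, \(i(u)=\dim L_u\leq n\) holds automatically for every \(u\in P_\Delta\cap M\). So the condition \(i(u)\leq k\) in Proposition~\ref{prop:individual-weight} is vacuous when \(k=n\).

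For the summand, let \(u\) have \(i(u)=i\). By definition, \(N_u^n=\bigwedge^iL_u\wedge\bigwedge^{n-i}N_{\C}\), and by \eqref{eq:weight-dimension} it has dimension \(\binom{n-i}{n-i}=1\). It is a subspace of the line \(\bigwedge^nN_{\C}\), so it equals that line. Substituting into \eqref{eq:main-decomposition} gives the stated decomposition.

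For the dimension count, each summand is one-dimensional, because \(\rho_N\) is injective on \(\bigwedge^nN_{\C}\) and multiplication by \(\chi^u\) is injective. The sum is direct because the summands lie in distinct weight spaces \(-u\). Hence the dimension is \(\#(P_\Delta\cap M)\), which is finite by Proposition~\ref{lem:interior-origin}. Alternatively, one can set \(k=n\) in Corollary~\ref{cor:dimension}: there all binomial coefficients equal \(1\), and the sum over faces counts every lattice point exactly once.

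There is no real obstacle here. The only point needing care is the partition of \(P_\Delta\) into relative interiors of faces. This is the standard fact that ensures each lattice point is counted exactly once.
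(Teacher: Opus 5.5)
Your proposal is correct and follows the same route as the paper: specialize Theorem~\ref{thm:main} to \(k=n\). Every lattice point \(u\in P_\Delta\cap M\) then contributes, and each \(N_u^n\) has dimension \(\binom{n-i}{n-i}=1\). Your extra remark that \(N_u^n\), being a one-dimensional subspace of the line \(\bigwedge^n N_{\C}\), must equal it makes explicit a step the paper leaves implicit when it writes the summands as \(\chi^u\rho_N(\bigwedge^n N_{\C})\).
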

\begin{remark}
Since
\[
\bigwedge\nolimits^nT_{X_\Delta} \cong \mathcal{O}_{X_\Delta}(-K_{X_\Delta}),
\]
the result agrees with the standard lattice-point description of sections of a torus-invariant divisor
\cites{Cox, Fulton}.
\end{remark}

\subsection{The blow-up of \(\mathbb P^3\) at a torus-fixed point}\label{subsec:blowup-P3}

Consider the blow-up $X$ of $\mathbb{P}^3$ at the torus-fixed point $p=[1:0:0:0]$
\[
\pi \colon X = \operatorname{Bl}_{p}\mathbb P^3 \longrightarrow\mathbb P^3.
\]
Thus
\[
\pi^{-1}(p)\cong\mathbb P^2,
\]
and
\[
\pi \colon  X\setminus\pi^{-1}(p) \longrightarrow \mathbb P^3\setminus\{p\}
\]
is an isomorphism.

Let \(N=\Z^3\), with standard basis \(e_1,e_2,e_3\). The fan \(\Delta\) of \(X\) has primitive ray generators
\[
v_0=(-1,-1,-1),\qquad v_1=(1,0,0),\qquad v_2=(0,1,0),
\]
\[
v_3=(0,0,1),\qquad v_4=(1,1,1).
\]
The ray generated by \(v_4\) is introduced by the star subdivision of
\[
\operatorname{Cone}(v_1,v_2,v_3),
\]
which corresponds to blowing up the fixed point \(p\); see~\cite{Cox, Fulton}.

\begin{figure}[htbp]
\centering
\begin{tikzpicture}[
  x={(1.15cm,0cm)},
  y={(-0.55cm,0.38cm)},
  z={(0cm,1.05cm)},
  scale=1.25,
  >=Stealth,
  ray/.style={->,very thick,blue!70!black},
  newray/.style={->,very thick,red!75!black},
  coneedge/.style={thin,gray!65},
  labelstyle/.style={
    font=\small,
    fill=white,
    inner sep=1.5pt
  }
]

\coordinate (O) at (0,0,0);

\coordinate (Vzero) at (-1.30,-1.30,-1.30);
\coordinate (Vone)  at (2.20,0,0);
\coordinate (Vtwo)  at (0,2.20,0);
\coordinate (Vthree) at (0,0,2.20);
\coordinate (Vfour) at (1.25,1.25,1.25);

\draw[coneedge] (Vone)--(Vtwo);
\draw[coneedge] (Vone)--(Vthree);
\draw[coneedge] (Vtwo)--(Vthree);

\draw[coneedge,dashed] (Vzero)--(Vone);
\draw[coneedge,dashed] (Vzero)--(Vtwo);
\draw[coneedge,dashed] (Vzero)--(Vthree);

\draw[coneedge,red!55] (Vfour)--(Vone);
\draw[coneedge,red!55] (Vfour)--(Vtwo);
\draw[coneedge,red!55] (Vfour)--(Vthree);

\draw[ray] (O)--(Vzero);
\draw[ray] (O)--(Vone);
\draw[ray] (O)--(Vtwo);
\draw[ray] (O)--(Vthree);
\draw[newray] (O)--(Vfour);

\fill (O) circle (1.5pt);
\node[labelstyle,below left=1pt of O] {\(0\)};

\node[labelstyle,below right=1pt of Vone]
  {\(v_1=(1,0,0)\)};

\node[labelstyle,left=2pt of Vtwo]
  {\(v_2=(0,1,0)\)};

\node[labelstyle,above=2pt of Vthree]
  {\(v_3=(0,0,1)\)};

\node[labelstyle,right=2pt of Vfour]
  {\(v_4=(1,1,1)\)};

\node[labelstyle,below=3pt of Vzero]
  {\(v_0=(-1,-1,-1)\)};
\end{tikzpicture}

\caption{
A schematic projection of the fan of
\(\operatorname{Bl}_{p}\mathbb P^3\), where
\(p=[1:0:0:0]\).
}
\label{fig:blowup-fan}
\end{figure}

Identifying \(M_{\RR}\) with \(\RR^3\), the associated anticanonical polytope is
\begin{equation}\label{eq:blowup-polytope}
P_\Delta = \left\{ (x,y,z)\in\RR^3\ \middle|\  x,y,z\geq-1,\quad -1\leq x+y+z\leq1 \right\}.
\end{equation}
Indeed, the inequalities associated with \(v_1,v_2,v_3\) are
\[
x\geq-1,\qquad y\geq-1,\qquad z\geq-1,
\]
whereas those associated with \(v_0\) and \(v_4\) are
\[
x+y+z\leq1 \qquad\text{and}\qquad x+y+z\geq-1,
\]
respectively.

Geometrically, \(P_\Delta\) is a triangular prism bounded by the
two parallel facets
\[
x+y+z=-1 \qquad\text{and}\qquad x+y+z=1,
\]
and the three coordinate facets
\[
x=-1,\qquad y=-1,\qquad z=-1.
\]

\begin{figure}[htbp]
\centering
\begin{tikzpicture}[
  x={(1.05cm,0cm)},
  y={(-0.48cm,0.32cm)},
  z={(0cm,0.90cm)},
  scale=1.05,
  vertex/.style={circle,fill=black,inner sep=1.7pt},
  edge/.style={very thick,black!75},
  facetlower/.style={fill=blue!25,opacity=0.55},
  facetupper/.style={fill=orange!30,opacity=0.55},
  sidefacet/.style={fill=green!18,opacity=0.35},
  pointlabel/.style={
    font=\scriptsize,
    fill=white,
    inner sep=1.2pt,
    text opacity=1,
    fill opacity=0.85
  }
]

\coordinate (A) at (1,-1,-1);
\coordinate (B) at (-1,1,-1);
\coordinate (C) at (-1,-1,1);

\coordinate (D) at (3,-1,-1);
\coordinate (E) at (-1,3,-1);
\coordinate (F) at (-1,-1,3);

\coordinate (O) at (0,0,0);

\fill[sidefacet] (A)--(B)--(E)--(D)--cycle;
\fill[sidefacet] (B)--(C)--(F)--(E)--cycle;
\fill[sidefacet] (C)--(A)--(D)--(F)--cycle;

\fill[facetlower] (A)--(B)--(C)--cycle;
\fill[facetupper] (D)--(E)--(F)--cycle;

\draw[edge] (A)--(B);
\draw[edge] (B)--(C);
\draw[edge] (C)--(A);

\draw[edge] (D)--(E);
\draw[edge] (E)--(F);
\draw[edge] (F)--(D);

\draw[edge] (A)--(D);
\draw[edge] (B)--(E);
\draw[edge] (C)--(F);

\foreach \P in {A,B,C,D,E,F}
  \node[vertex] at (\P) {};

\fill[red!75!black] (O) circle (2pt);
\node[
  pointlabel,
  text=red!75!black,
  above right=1pt of O
]
{\(O=(0,0,0)\)};

\node[pointlabel,below right=2pt of A]
  {\((1,-1,-1)\)};

\node[pointlabel,below left=2pt of B]
  {\((-1,1,-1)\)};

\node[pointlabel,left=3pt of C]
  {\((-1,-1,1)\)};

\node[pointlabel,below right=2pt of D]
  {\((3,-1,-1)\)};

\node[pointlabel,above left=2pt of E]
  {\((-1,3,-1)\)};

\node[pointlabel,above=3pt of F]
  {\((-1,-1,3)\)};

\node[
  font=\footnotesize,
  text=blue!60!black,
  fill=white,
  inner sep=1.5pt
]
at (-0.25,-0.20,-0.55)
{\(x+y+z=-1\)};

\node[
  font=\footnotesize,
  text=orange!70!black,
  fill=white,
  inner sep=1.5pt
]
at (0.65,0.65,0.65)
{\(x+y+z=1\)};

\end{tikzpicture}

\caption{
A projected view of the anticanonical polytope
\(P_\Delta\) for \(\operatorname{Bl}_{p}\mathbb P^3\).
It is a triangular prism between the parallel facets
\(x+y+z=-1\) and \(x+y+z=1\).
The red point is the unique interior lattice point
\(O=(0,0,0)\).
}
\label{fig:blowup-polytope}
\end{figure}

As shown in Figure~\ref{fig:blowup-polytope}, the six vertices of \(P_\Delta\) are
\[
(1,-1,-1),\qquad (-1,1,-1),\qquad (-1,-1,1),
\]
\[
(3,-1,-1),\qquad (-1,3,-1),\qquad (-1,-1,3).
\]

A direct enumeration gives the following face strata:
\[
S(\Delta,0)=\{(0,0,0)\},
\]
\[
\begin{split}
S(\Delta,1)=\{&
(0,1,-1),(0,-1,1),(1,0,-1),(-1,0,1),\\
&(1,-1,0),(-1,1,0),(1,0,0),(0,1,0),(0,0,1)
\},
\end{split}
\]
\[
\begin{split}
S(\Delta,2)=\{&
(0,2,-1),(2,-1,0),(-1,0,2),\\
&(2,0,-1),(-1,2,0),(0,-1,2),\\
&(2,-1,-1),(-1,2,-1),(-1,-1,2),\\
&(0,0,-1),(0,-1,0),(-1,0,0),\\
&(1,-1,1),(-1,1,1),(1,1,-1)
\},
\end{split}
\]
and
\[
\begin{split}
S(\Delta,3)=\{&
(1,-1,-1),(-1,1,-1),(-1,-1,1),\\
&(3,-1,-1),(-1,3,-1),(-1,-1,3)
\}.
\end{split}
\]

Thus,
\[
\#S(\Delta,0)=1,\qquad \#S(\Delta,1)=9,
\]
\[
\#S(\Delta,2)=15,\qquad \#S(\Delta,3)=6,
\]
and
\[
\#(P_\Delta\cap M)=31.
\]

Corollary~\ref{cor:dimension} gives
\[
\begin{split}
\dim_{\CC}H^0(X,T_X) &= 3\#S(\Delta,0)+\#S(\Delta,1)\\
&=3+9=12,
\end{split}
\]
\[
\begin{split}
\dim_{\CC}
H^0\!\left(X,\bigwedge\nolimits^2T_X\right) &= 3\#S(\Delta,0) +2\#S(\Delta,1) +\#S(\Delta,2)\\
&=3+18+15=36,
\end{split}
\]
and
\[
\dim_{\CC} H^0\!\left(X,\bigwedge\nolimits^3T_X\right) = \#(P_\Delta\cap M) =31.
\]

Finally, we write explicitly down a basis for each of these three vector spaces.

Note that the dense torus in \(\mathbb P^3\) is
\[
\widetilde T =
\left\{
[1:z_1:z_2:z_3]\in\mathbb P^3
\ \middle|\
z_1,z_2,z_3\in\CC^*
\right\}.
\]
Since \(p\notin\widetilde T\), the blow-up identifies the dense torus of \(X\) with \(\widetilde T\).

We first fix some notations: Put
\[
E_i=z_i\frac{\partial}{\partial z_i}, \qquad1\leq i\leq3,
\]
and
\[
E=E_1+E_2+E_3.
\]
Then
\[
\rho_N(v_i)=E_i,\qquad1\leq i\leq3,
\]
and
\[
\rho_N(v_0)=-E,\qquad \rho_N(v_4)=E.
\]

\subsubsection*{Vector fields}

The invariant vector fields are
\[
E_1,\qquad E_2,\qquad E_3.
\]
The nine nonzero root vectors are
\[
z_2\frac{\partial}{\partial z_3},\qquad
z_3\frac{\partial}{\partial z_2},\qquad
z_1\frac{\partial}{\partial z_3},
\]
\[
z_3\frac{\partial}{\partial z_1},\qquad
z_1\frac{\partial}{\partial z_2},\qquad
z_2\frac{\partial}{\partial z_1},
\]
together with
\[
z_1E,\qquad z_2E,\qquad z_3E.
\]
Together with the three invariant vector fields, these form a basis
of the \(12\)-dimensional space \(H^0(X,T_X)\).

\subsubsection*{Bivector fields}

The invariant bivector fields have basis
\[
E_1\wedge E_2,\qquad E_1\wedge E_3,\qquad E_2\wedge E_3.
\]
In coordinates these are
\[
z_1z_2 \frac{\partial}{\partial z_1} \wedge \frac{\partial}{\partial z_2},
\qquad
z_1z_3 \frac{\partial}{\partial z_1} \wedge \frac{\partial}{\partial z_3},
\qquad
z_2z_3 \frac{\partial}{\partial z_2} \wedge \frac{\partial}{\partial z_3}.
\]

For \(u\in S(\Delta,1)\), let \(r_u\) be the normal of the unique active facet. Then
\[
N_u^2=r_u\wedge N_{\CC}
\]
has dimension two. The active normals and generators are given in the following table:
\[
\begin{array}{c|c|c}
u&r_u& \text{generators} \\
\hline
(0,1,-1),(1,0,-1)&v_3& \chi^uE_3\wedge E_1,\; \chi^uE_3\wedge E_2; \\
(0,-1,1),(1,-1,0)&v_2& \chi^uE_2\wedge E_1,\; \chi^uE_2\wedge E_3; \\
(-1,0,1),(-1,1,0)&v_1& \chi^uE_1\wedge E_2,\; \chi^uE_1\wedge E_3; \\
(1,0,0),(0,1,0),(0,0,1)& v_0 & \chi^uE\wedge E_1,\; \chi^uE\wedge E_2.
\end{array}
\]
These provide the \(18\) facet-indexed bivectors.

For \(u\in S(\Delta,2)\), the normal space \(L_u\) is spanned by the two active facet normals \(r_u,s_u\). The corresponding one-dimensional weight space is generated by
\[
\chi^u\rho_N(r_u\wedge s_u).
\]
The active pairs and corresponding generators are summarized in the table below:
\[
\begin{array}{c|c|c}
u & \{r_u,s_u\} & \text{generator }\\
\hline
(0,2,-1),(2,0,-1),(1,1,-1)&\{v_3,v_0\} & \chi^uE_3\wedge E\\
(2,-1,0),(0,-1,2),(1,-1,1)&\{v_2,v_0\} & \chi^uE_2\wedge E\\
(-1,0,2),(-1,2,0),(-1,1,1)&\{v_1,v_0\} & \chi^uE_1\wedge E\\
(2,-1,-1)&\{v_2,v_3\}   &\chi^uE_2\wedge E_3\\
(-1,2,-1)&\{v_1,v_3\}  &\chi^uE_1\wedge E_3 \\
(-1,-1,2)&\{v_1,v_2\}  &\chi^uE_1\wedge E_2\\
(0,0,-1)&\{v_3,v_4\}  &\chi^uE_3\wedge E \\
(0,-1,0)&\{v_2,v_4\}  &\chi^uE_2\wedge E\\
(-1,0,0)&\{v_1,v_4\}  &\chi^uE_1\wedge E
\end{array}
\]
These contribute the remaining \(15\) bivectors. Hence, the total number of generators of $H^0(X, \wedge^2 T_X)$ is
\[
3+18+15=36,
\]
in agreement with the dimension formula.

\subsubsection*{Trivector fields}

For \(k=3\), every lattice point
\[
u=(u_1,u_2,u_3)\in P_\Delta\cap M
\]
contributes a one-dimensional weight space generated by
\[
\chi^uE_1\wedge E_2\wedge E_3.
\]
Since
\[
E_1\wedge E_2\wedge E_3 = z_1z_2z_3
\frac{\partial}{\partial z_1}
\wedge
\frac{\partial}{\partial z_2}
\wedge
\frac{\partial}{\partial z_3},
\]
this generator is
\[
z_1^{u_1+1}z_2^{u_2+1}z_3^{u_3+1}
\frac{\partial}{\partial z_1}
\wedge
\frac{\partial}{\partial z_2}
\wedge
\frac{\partial}{\partial z_3}.
\]

Put
\[
a_i=u_i+1,\qquad1\leq i\leq3.
\]
The defining inequalities of \(P_\Delta\) become
\[
a_i\geq0,
\qquad
2\leq a_1+a_2+a_3\leq4.
\]
Therefore the \(31\) generators are indexed by all monomials in
\(z_1,z_2,z_3\) of total degree \(d=2,3,4\).

The degree-two monomials are
\[
z_1^2,\quad z_2^2,\quad z_3^2,\quad
z_1z_2,\quad z_1z_3,\quad z_2z_3.
\]

The degree-three monomials are
\[
z_1^3,\quad z_2^3,\quad z_3^3,
\]
\[
z_1^2z_2,\quad z_1z_2^2,\quad z_1^2z_3,\quad z_1z_3^2,
\]
\[
z_2^2z_3,\quad z_2z_3^2,\quad z_1z_2z_3.
\]

The degree-four monomials are
\[
z_1^4,\quad z_2^4,\quad z_3^4,
\]
\[
z_1^3z_2,\quad z_1z_2^3,\quad
z_1^3z_3,\quad z_1z_3^3,
\]
\[
z_2^3z_3,\quad z_2z_3^3,
\]
\[
z_1^2z_2^2,\quad
z_1^2z_3^2,\quad
z_2^2z_3^2,
\]
\[
z_1^2z_2z_3,\quad
z_1z_2^2z_3,\quad
z_1z_2z_3^2.
\]

Multiplying every monomial in these lists by
\[
\frac{\partial}{\partial z_1}
\wedge
\frac{\partial}{\partial z_2}
\wedge
\frac{\partial}{\partial z_3}
\]
gives a basis of
\[
H^0\!\left(X,\bigwedge\nolimits^3T_X\right).
\]
The number of monomials is
\[
\binom{4}{2}+\binom{5}{2}+\binom{6}{2}
=
6+10+15
=
31.
\]

\begin{remark}
The example illustrates the face-codimension rule. The interior lattice point contributes invariant polyvectors. A facet-interior point imposes one normal factor, an edge-interior point imposes two,
and a vertex imposes three. The corresponding dimensions are
\[
\binom{3-i}{k-i},
\]
as stated in Theorem~\ref{thm:main}.
\end{remark}

{\bf Funding}

This work was supported by National Key R\&D Program of China [2022YFA1006200]; the Natural Science Funding of China [Grant NO. 12071241].

{\bf Acknowledgements}

We would like to thank Chuangqiang Hu, Yu Qiao and Ping Xu for helpful discussions and comments.
We are also grateful to anonymous referees for emphasizing the relationship with Klyachko's theory, for requesting a transparent account of the earlier preprints and the work of Deng--Hong, and for
pointing out the issues concerning the weight sign, the scope of the result, and the terminology for \(P_\Delta\). These comments led to a substantial reorganization of the manuscript.

{\bf Declaration of generative AI and AI-assisted technologies in the manuscript preparation process}

During the preparation and revision of this manuscript, the authors used AI to assist with English-language editing and the TikZ code of the two figures.
The authors take full responsibility for the content of the manuscript.

{\bf Conflict of interest}

We hereby declare that this work has no related financial or non-financial conflict of interests.

\end{document}